\newtheorem{theorem}{Theorem}[section]
\newtheorem{lemma}[theorem]{Lemma}
\theoremstyle{definition}
\newtheorem{definition}[theorem]{Definition}
\numberwithin{equation}{section}
\DeclareMathOperator{\asym}{\mathbf{ASym}}
\DeclareMathOperator{\sym}{\mathbf{Sym}}
\DeclareMathOperator{\sgn}{sgn}
\DeclareMathOperator{\ct}{CT}
\DeclareMathOperator{\E}{E}
\DeclareMathOperator{\fd}{\Delta}
\DeclareMathOperator{\bd}{\delta}
\DeclareMathOperator{\id}{id}
\DeclareMathOperator{\Qfd}{{}^\textit{M}\Delta}
\DeclareMathOperator{\Qbd}{{}^\textit{M}\delta}
\DeclareMathOperator{\subsets}{\mathbf{Subsets}}
\DeclareMathOperator{\AST}{ASTZ}
\DeclareMathOperator{\CSSPP}{CSSPP}
\DeclareMathOperator{\MT}{MT}
\newcommand*\pFq[6][8]{%
	\begingroup 
	\pFqmuskip=#1mu\relax
	\mathchardef\normalcomma=\mathcode`,
	\mathcode`\,=\string"8000
	\begingroup\lccode`\~=`\,
	\lowercase{\endgroup\let~}\pFqcomma
	{}_{#2}F_{#3}{\left[\left.\genfrac..{0pt}{}{#4}{#5}\right|#6\right]}%
	\endgroup
}
\newcommand{\pFqcomma}{{\normalcomma}\mskip\pFqmuskip}
\newcounter{xmt}
\newcounter{ymt}
\newcommand\mt[1]{
	\setcounter{ymt}{-1}
	\foreach \p in {#1} {
		\addtocounter{ymt}{1}
		\setcounter{xmt}{\value{ymt}-1}
		\foreach \q in \p {
			\addtocounter{xmt}{2}
			\node at (\value{xmt}*.5,\value{ymt}*.5) {\q};   
		}
	}
}
\begin{document}

\title{A Fourfold Refined Enumeration of Alternating Sign Trapezoids}

\author{Hans H{\"o}ngesberg\thanks{The author acknowledges support from the Austrian Science Foundation FWF, SFB grant F50.}
	}
	
\date{}


\maketitle

\begin{abstract}
	Alternating sign trapezoids have recently been introduced as a generalisation of alternating sign triangles. Fischer established a threefold refined enumeration of alternating sign trapezoids and provided three statistics on column strict shifted plane partitions with the same joint distribution. In this paper, we are able to add a new pair of statistics to these results. More precisely, we consider the number of $-1$s on alternating sign trapezoids and introduce a corresponding statistic on column strict shifted plane partitions that has the same distribution. More generally, we show that the joint distributions of the two quadruples of statistics on alternating sign trapezoids and column strict shifted plane partitions, respectively, coincide. In addition, we provide a closed-form expression for the $2$-enumeration of alternating sign trapezoids.
\end{abstract}


\section{Introduction}

Since their introduction in the early 1980s, alternating sign matrices have generated great interest among combinatorialists. Mills, Robbins and Rumsey \cite{MRR83} conjectured them to be equinumerous with descending plane partitions, which had been enumerated by Andrews \cite{And79} a few years earlier; this was finally proved over a decade later first by Zeilberger \cite{Zei96a} and shortly thereafter by Kuperberg \cite{Kup96}. Since then, the spellbinding research of alternating sign matrices has revealed new equinumerous classes of combinatorial objects but finding bijections remains one of the most challenging problems. Equally distributed statistics on these objects might finally lead to those eagerly awaited bijections. Embracing this idea, we provide a fourfold refined enumeration of alternating sign trapezoids, a recently defined  generalisation of alternating sign triangles. Moreover, we establish four statistics on certain column strict shifted plane partitions with the same joint distribution. Thus, we generalise the recent refined enumerations of alternating sign trapezoids and of column strict shifted plane partitions by Fischer \cite{Fis}. 

We start by introducing alternating sign trapezoids and column strict shifted plane partitions together with four statistics on each of these classes of objects.

\begin{definition}\label{def:ASTZs}
	For given integers~$n \geq 1$ and $l \geq 2$, an \emph{$(n,l)$-alternating sign trapezoid} is an array of $-1$s, $0$s and $+1$s in a trapezoidal shape with $n$ rows
	of the following form
	\begin{equation*}
	\begin{array}[t]{ccccccccc}
	a_{1,1}&a_{1,2}&\cdots&\cdots&\cdots&\cdots&\cdots&\cdots&a_{1,2n+l-2}\\
	&a_{2,2}&\cdots&\cdots&\cdots&\cdots&\cdots&a_{2,2n+l-3}&\\
	&&\ddots&&&&\reflectbox{$\ddots$}&&\\
	&&&a_{n,n}&\cdots&a_{n,n+l-1}&&&
	\end{array}
	\end{equation*}
	such that the following four conditions hold: the nonzero entries alternate in sign in each row and each column; the topmost nonzero entry in each column is $1$ (if existent); the entries in each row sum to $1$; and the entries in the central $l-2$ columns sum to $0$.

	An \emph{$(n,1)$-alternating sign trapezoid} is defined as above with the exception that the bottom row, which consists of a single entry in this case, does not have to add up to $1$, but where this entry can either be $1$ or $0$.
\end{definition}

Note that the notion of $(n,1)$-alternating sign trapezoids coincide with the notion of \emph{quasi alternating sign triangles of order $n$}. Furthermore, $(n,3)$-alternating sign trapezoids are in bijective correspondence with \emph{alternating sign triangles of order $n+1$}. Both alternating sign triangles and quasi alternating sign triangles were introduced by Ayyer, Behrend and Fischer \cite{ABF}.

From the definition, it follows that the entries in each column of an alternating sign trapezoid sum to $0$ or $1$. A column whose entries sum to $1$ is called a \emph{$1$-column}. If, in addition, the bottom entry of a $1$-column is $0$, we call the column a \emph{$10$-column}.
Note that the number of $1$-columns in any $(n,l)$-alternating sign trapezoid is exactly $n$ if $l \neq 1$ because the sum of all entries in an $(n,l)$-ASTZ is $n$ as all rows add up to $1$; otherwise, it is $n$ or $n-1$.
\begin{figure}[ht]
	\centering
	\begin{equation*}
	\begin{array}[t]{cccccccccc}
	0 & 0 & 0 & 0 & 0 & 1 & 0 & 0 & 0 & 0\\
	& 1 & 0 & 0 & 0 & -1 & 0 & 1 & 0 &  \\
	&   & 0 & 0 & 0 & 0 & 1 & 0 &   &  \\
	&   &   & 1 & 0 & 0 & 0 &   &   &  \\
	\end{array}
	\end{equation*}
	\caption{$(4,4)$-alternating sign trapezoid $A$ with $\mu(A)=1$, $r(A)=2$, $p(A)=0$ and $q(A)=2$}
	\label{fig:AST}
\end{figure}

Let $\AST_{n,l}$ denote the set of  $(n,l)$-alternating sign trapezoids and consider $A \in \AST_{n,l}$. We introduce four different statistics on alternating sign trapezoids. First, we define
\begin{align*}
\mu(A) &\coloneqq \text{\# $-1$s in $A$,}\\
r(A) &\coloneqq \text{\# $1$-columns among the $n$ leftmost columns of $A$.}
\end{align*}
Then, we distinguish two cases: For $l \ge 2$, we define 
\begin{align*}
p(A) &\coloneqq \text{\# $10$-columns among the $n$ leftmost columns of $A$,}\\
q(A) &\coloneqq \text{\# $10$-columns among the $n$ rightmost columns of $A$.}
\end{align*}
An example of a $(4,4)$-alternating sign trapezoid is given in Figure~\ref{fig:AST}. We set the generating function $\mathcal{Z}_{\AST}(n,l;M,R,P,Q)$ of $(n,l)$-alternating sign trapezoids associated to the statistics above to be
\begin{equation*}
\sum_{A \in \AST_{n,l}}^{} M^{\mu(A)} R^{r(A)} P^{p(A)} Q^{q(A)}.
\end{equation*}
Table~\ref{tab:24ASTs} shows all $(2,4)$-alternating sign trapezoids and their respective weights. The sum of these weights yields $\mathcal{Z}_{\AST}(2,4;M,R,P,Q) = 1 + 2MR + 2R + R^2 + MP + MQ$.

\begin{table}[ht]
	\centering
	\caption{$(2,4)$-alternating sign trapezoids}
	\begin{math}
		\begin{array}{cccc}
		\toprule
		\begin{array}[t]{cccccc}
			1  &  0  &  0  &  0  &  0  &  0 \\
			&  1  &  0  &  0  &  0  &
		\end{array}
		
		&
		
		\begin{array}[t]{cccccc}
			0  &  0  &  0  &  0  &  1  &  0 \\
			&  1  &  0  &  0  &  0  &
		\end{array}
	
		&
		
		\begin{array}[t]{cccccc}
			0  &  0  &  0  &  0  &  0  &  1 \\
			&  1  &  0  &  0  &  0  &
		\end{array}
		
		&
		
		\begin{array}[t]{cccccc}
			1  &  0  &  0  &  0  &  0  &  0 \\
			&  0  &  0  &  0  &  1  &
		\end{array}
			
		\\\midrule
		
		R^2 & R Q & R & R
		
		\\\midrule\midrule
		
		\begin{array}[t]{cccccc}
			0  &  1  &  0  &  0  &  0  &  0 \\
			&  0  &  0  &  0  &  1  &
		\end{array}
		
		&
		
		\begin{array}[t]{cccccc}
			0  &  0  &  0  &  0  &  0  &  1 \\
			&  0  &  0  &  0  &  1  &
		\end{array}
		
		&
		
		\begin{array}[t]{cccccc}
			0  &  0  &  1  &  0  &  0  &  0 \\
			&  1  &  -1  &  0  &  1  &
		\end{array}
		
		&
		
		\begin{array}[t]{cccccc}
			0  &  0  &  0  &  1  &  0  &  0 \\
			&  1  &  0  &  -1  &  1  &
		\end{array}
		
		\\\midrule
		
		R P & 1 & M R & M R
		
		\\\bottomrule
		\end{array}
	\end{math}
	\label{tab:24ASTs}
\end{table}

For $l=1$, however, we adapt the statistics in the following way:
\begin{align*}
\tilde{p}(A) &\coloneqq \text{\# $10$-columns among the $n-1$ leftmost columns of $A$,}\\
\tilde{q}(A) &\coloneqq \text{\# $10$-columns among the $n-1$ rightmost columns of $A$.}
\end{align*}
We define $\mathcal{Z}_{\AST}(n,1;M,R,P,Q)$ as
\begin{equation*}
\sum_{A \in \AST_{n,1}}^{} M^{\mu(A)} R^{r(A)} P^{\tilde{p}(A)} Q^{\tilde{q}(A)} (P+Q-M)^{\left[\text{central column is a $10$-column}\right]},
\end{equation*}
where we use the \emph{Iverson bracket}: For a logical proposition $P$, $\left[P\right]=1$ if $P$ holds true and $\left[P\right]=0$ otherwise.

Table~\ref{tab:31ASTs} provides a complete list of $(3,1)$-alternating sign trapezoids and their respective weights. By adding up all these weights, we see that the generating function $\mathcal{Z}_{\AST}(3,1;M,R,P,Q)$ equals
\begin{equation}
1 - MR - MR^2 - MR^2P - MRQ + 3R + 3R^2 + R^3 + 3RP + RPQ + 3R^2P + R^2PQ + R^2P^2 + 3RQ + RQ^2 + 3R^2Q.
\end{equation}

\begin{table}[ht]
	\centering
	\caption{$(3,1)$-alternating sign trapezoids}
	\begin{math}
	\begin{array}{ccccc}
		\toprule
		\begin{array}[t]{ccccc}
			1  &  0  &  0  &  0  &  0 \\
			&  1  &  0  &  0  & \\
			&  &  1  &  & \\
		\end{array}
		
		&
		
		\begin{array}[t]{ccccc}
			0  &  0  &  0  &  1  &  0 \\
			&  1  &  0  &  0  & \\
			&  &  1  &  & \\
		\end{array}
		
		&
		
		\begin{array}[t]{ccccc}
			0  &  0  &  0  &  0 &  1 \\
			&  1  &  0  &  0  & \\
			&  &  1  &  & \\
		\end{array}
		
		&
		
		\begin{array}[t]{ccccc}
			1  &  0  &  0  &  0  &  0 \\
			&  0  &  0  &  1  & \\
			&  &  1  &  & \\
		\end{array}
	
		&
		
		\begin{array}[t]{ccccc}
			0  &  1  &  0  &  0  &  0 \\
			&  0  &  0  &  1  & \\
			&  &  1  &  & \\
		\end{array}
		
		\\\midrule
		
		R^3 & R^2 Q & R^2 & R^2 & R^2 P
		
		\\\midrule\midrule
		
		\begin{array}[t]{ccccc}
			0  &  0  &  0  &  0  &  1 \\
			&  0  &  0  &  1  & \\
			&  &  1  &  & \\
		\end{array}
		
		&
		
		\begin{array}[t]{ccccc}
			0  &  0  &  1  &  0  &  0 \\
			&  1  &  -1  &  1  & \\
			&  &  1  &  & \\
		\end{array}
		
		&
		
		\begin{array}[t]{ccccc}
			1  &  0  &  0  &  0  &  0 \\
			&  1  &  0  &  0  & \\
			&  &  0  &  & \\
		\end{array}
		
		&
		
		\begin{array}[t]{ccccc}
			0  &  0  &  1  &  0  &  0 \\
			&  1  &  0  &  0  & \\
			&  &  0  &  & \\
		\end{array}
		
		&
		
		\begin{array}[t]{ccccc}
			0  &  0  &  0  &  1  &  0 \\
			&  1  &  0  &  0  & \\
			&  &  0  &  & \\
		\end{array}
		
		\\\midrule
		
		R & M R^2 & R^2 & R^2 (P+Q-M) & R Q
		
		\\\midrule\midrule
		
		\begin{array}[t]{ccccc}
			0  &  0  &  0  &  0  &  1 \\
			&  1  &  0  &  0  & \\
			&  &  0  &  & \\
		\end{array}
	
		&
		
		\begin{array}[t]{ccccc}
			1  & 0  &  0  &  0  &  0 \\
			&  0  &  1  &  0  & \\
			&  &  0  &  & \\
		\end{array}
	
		&
	
		\begin{array}[t]{ccccc}
			0  &  1  &  0  &  0  &  0 \\
			&  0  &  1  &  0  & \\
			&  &  0  &  & \\
		\end{array}

		&

		\begin{array}[t]{ccccc}
			0  &  0  &  0  &  1  &  0 \\
			&  0  &  1  &  0  & \\
			&  &  0  &  & \\
		\end{array}

		&

		\begin{array}[t]{ccccc}
			0  &  0  &  0  &  0  &  1 \\
			&  0  &  1  &  0  & \\
			&  &  0  &  & \\
		\end{array}
		
		\\\midrule
		
		R & R^2 (P+Q-M) & R^2 P (P+Q-M) & R Q (P+Q-M) & R (P+Q-M)
		
		\\\midrule\midrule
		
		\begin{array}[t]{ccccc}
			1  &  0  &  0  &  0  &  0 \\
			&  0  &  0  &  1  & \\
			&  &  0  &  & \\
		\end{array}
		
		&
		
		\begin{array}[t]{ccccc}
			0  &  1  &  0  &  0  &  0 \\
			&  0  &  0  &  1  & \\
			&  &  0  &  & \\
		\end{array}
		
		&
		
		\begin{array}[t]{ccccc}
			0  &  0  &  1  &  0  &  0 \\
			&  0  &  0  &  1  & \\
			&  &  0  &  & \\
		\end{array}
		
		&
		
		\begin{array}[t]{ccccc}
			0  &  0  &  0  &  0  &  1 \\
			&  0  &  0  &  1  & \\
			&  &  0  &  & \\
		\end{array}

		&
		
		\begin{array}[t]{ccccc}
			0  &  0  &  1  &  0  &  0 \\
			&  1  &  -1  &  1  & \\
			&  &  0  &  & \\
		\end{array}
		
		\\\midrule
		
		R & R P & R (P+Q-M) & 1 & M R
		
		\\\bottomrule
	\end{array}
	\end{math}
	\label{tab:31ASTs}
\end{table}

Clearly, $p(A) \le r(A)$ if $l \ge 2$ and $\tilde{p}(A) \le r(A)$ if $l = 1$ for any $(n,l)$-alternating sign trapezoid $A$. Moreover, by reflecting along the vertical symmetric axis, we see that
\[\mathcal{Z}_{\AST}(n,l;M,R,P,Q) = R^n \mathcal{Z}_{\AST}(n,l;M,R^{-1},Q,P).\] 

Ayyer, Behrend and Fischer \cite{ABF} showed that $n \times n$-alternating sign matrices are equinumerous with $(n-1,3)$-alternating sign trapezoids. As a corollary of \cite[Theorem 1.2]{ABF}, the statistic~$\mu$ generalises -- in the most natural way -- a statistic on alternating sign triangles which has the same distribution as the number of $-1$s on alternating sign matrices.

\begin{definition}
	For a \emph{strict partition} $\lambda = \left( \lambda_1,\dots,\lambda_m \right)$, that is, a sequence $\lambda_1 > \dots > \lambda_m > 0$ of strictly decreasing positive integers, a \emph{shifted Young diagram} of \emph{shape} $\lambda$ is a finite collection of cells arranged in $m$ rows such that row~$i$ has length $\lambda_i$ and each row is indented by one cell compared to the row above.
	\begin{figure}[ht]
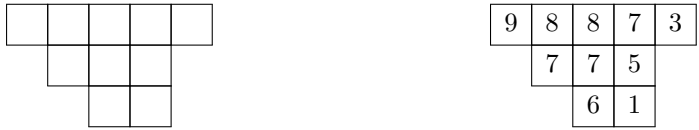

		\begin{center}
			\hfill\begin{minipage}{.4\textwidth}
				\ydiagram{5,1+3,2+2}
			\end{minipage}%
			\begin{minipage}{.4\textwidth}
				\begin{ytableau}
					9 & 8 & 8 & 7 & 3 \\
					\none & 7 & 7 & 5 \\
					\none & \none & 6 & 1 \\
				\end{ytableau}
			\end{minipage}
			\caption{A shifted Young diagram of shape~$(5,3,2)$ and a column strict shifted plane partition~$\pi$ of the same shape and of class~$4$ with $\mu_3(\pi)=2$, $r(\pi)=3$, $p_3(\pi)=1$ and $q(\pi)=1$}
			\label{fig:CSSPP}
		\end{center}
	\end{figure}

	A filling of a shifted Young diagram with positive integers such that the entries, termed \emph{parts}, weakly decrease along each row and strictly decrease down each column is called a \emph{column strict shifted plane partition}. It is of \emph{class~k} if the first part of each row~$i$ is exactly $k+\lambda_i$, that is, exactly $k$ plus its corresponding row length.
\end{definition}
	
For any $k$, we consider the collection of zero cells to be a column strict shifted plane partition with zero rows and of class $k$. Note that we usually omit the drawings of the cells. Furthermore, we cannot always associate a class to a given column strict shifted plane partition. Column strict shifted plane partitions of class $2$ correspond to \emph{descending plane partitions} as defined by Andrews \cite{And79}.

Let $\CSSPP_{n,k}$ denote the set of column strict shifted plane partitions of class $k$ with at most $n$ parts in the first row. Following the standard labelling of entries of matrices, we refer to the part in row~$i$ and column~$j$ of a column strict shifted plane partition $\pi$ as $\pi_{i,j}$. We introduce four different statistics on $\pi \in \CSSPP_{n,k}$ of which two depend on a fixed parameter~$d\in \{1,\dots,k\}$:
\begin{align*}
	\mu_d(\pi) &\coloneqq \text{\# parts $\pi_{i,j}\in\{2,3,\dots,j-i+k\} \setminus \{j-i+d\}$,}\\
	r(\pi) &\coloneqq \text{\# rows of $\pi$,}\\
	p_d(\pi) &\coloneqq \text{\# parts $\pi_{i,j}=j-i+d$,}\\
	q(\pi) &\coloneqq \text{\# parts $\pi_{i,j}=1$.}
\end{align*}
We define the generating function $\mathcal{Z}_{\CSSPP}(n,k,d;M,R,P,Q)$ associated to these statistics as
\begin{equation*}
\sum_{\pi \in \CSSPP_{n,k}}^{} M^{\mu_d(\pi)} R^{r(\pi)} P^{p_d(\pi)} Q^{q(\pi)}.
\end{equation*}

For $d=0$, we add another family of statistics by defining
\begin{align*}
	\mu_0(\pi) &\coloneqq \text{\# parts $\pi_{i,j}\in\{2,3,\dots,j-i+k\} \setminus \{j-i\}$,}\\
	p_0(\pi) &\coloneqq \text{\# parts $\pi_{i,j}=j-i>1$,}\\
	q_0(\pi) &\coloneqq \text{\# parts $\pi_{i,j}=1$ such that $j-i >1$.}
\end{align*}
In this case, the generating function $\mathcal{Z}_{\CSSPP}(n,k,0;M,R,P,Q)$ is defined as
\begin{equation*}
\sum_{\pi \in \CSSPP_{n,k}}^{} M^{\mu_0(\pi)} R^{r(\pi)} P^{p_0(\pi)} Q^{q_0(\pi)} (P+Q-M)^{\left[\text{$\pi_{i,j}=1$ such that $j-i=1$}\right]}.
\end{equation*}
Note that $j-i=1$ means that the part $\pi_{i,j}$ is in the second position of a row. A $1$ at the second position of a row can only occur in the bottom row. Also note that there is never a $1$ at the first position of a row if the class of the column strict shifted plane partition is greater than $0$.

An example of a column strict shifted plane partition is presented in Figure~\ref{fig:CSSPP}. Note that the parts counted by the statistic~$\mu$ generalise the \emph{special parts} in descending plane partitions that were first defined by Mills, Robbins and Rumsey \cite{MRR83}. They presented without proof a determinantal formula for an associated generating function that incorporates a total of three statistics. Subsequently, Behrend, Di Francesco and Zinn-Justin \cite{BFZ12} obtained a closely related determinantal formula and identified the connection to the one found by Mills, Robbins and Rumsey.

\begin{table}[ht]
	\centering
	\caption{Column strict shifted plane partitions of class~$3$ with at most two parts in the first row}
	\begin{math}
	\begin{array}{lcccccccc}
	\toprule
	
	&
	
	\begin{array}{c}
	\emptyset
	\end{array}
	
	&
	
	\begin{array}{c}
	4
	\end{array}
	
	&
	
	\begin{array}{cc}
	5 & 5
	\end{array}
	
	&
	
	\begin{array}{cc}
	5 & 4
	\end{array}
	
	&
	
	\begin{array}{cc}
	5 & 3
	\end{array}
	
	&
	
	\begin{array}{cc}
	5 & 2
	\end{array}
	
	&
	
	\begin{array}{cc}
	5 & 1
	\end{array}
	
	&
	
	\begin{array}{cc}
	5 & 5 \\
	& 4
	\end{array}
	
	\\\midrule
	
	d=0: & 1 & R & R & M R & M R & M R & R (P+Q-M) & R^2
	
	\\
	
	d=1: & 1 & R & R & M R & M R & R P & R Q & R^2
	
	\\
	
	d=2: & 1 & R & R & M R & R P & M R & R Q & R^2
	
	\\
	
	d=3: & 1 & R & R & R P & M R & M R & R Q & R^2
	
	\\\bottomrule
	\end{array}
	\end{math}
	\label{tab:23CSSPPs}
\end{table}

Table~\ref{tab:23CSSPPs} presents all column strict shifted plane partitions of class~$3$ with at most two parts in the first row. The weights with respect to all possible choices of the parameter $d\in\{0,1,2,3\}$ are listed below. By summing the weights for each $d$, we obtain $\mathcal{Z}_{\CSSPP}(2,3,d;M,R,P,Q) = 1 + 2MR + 2R + R^2 + RP + RQ$, which is independent of $d$. Similarly, the column strict shifted plane partitions of class 0 with at most three parts in the first row are shown in Table~\ref{tab:30CSSPPs}.

\begin{table}[ht]
	\centering
	\caption{Column strict shifted plane partitions of class~$0$ with at most three parts in the first row}
	\begin{math}
	\begin{array}{lccccc}
	\toprule
	
	&
	
	\begin{array}{c}
	\emptyset
	\end{array}
	
	&
	
	\begin{array}{c}
	1
	\end{array}
	
	&
	
	\begin{array}{cc}
	2 & 2
	\end{array}
	
	&
	
	\begin{array}{cc}
	2 & 1
	\end{array}
	
	&
	
	\begin{array}{ccc}
	3 & 3 & 3
	\end{array}
	
	\\\midrule
	
	d=0: & 1 & R & R & R (P+Q-M) & R 
	
	\\\midrule\midrule
	
	&
	
	\begin{array}{ccc}
	3 & 3 & 2
	\end{array}
	
	&
	
	\begin{array}{ccc}
	3 & 3 & 1
	\end{array}
	
	&
	
	\begin{array}{ccc}
	3 & 2 & 2
	\end{array}
	
	&
	
	\begin{array}{ccc}
	3 & 2 & 1
	\end{array}
	
	&
	
	\begin{array}{ccc}
	3 & 1 & 1
	\end{array}
	
	\\\midrule
	
	d=0: & R P & R Q & R P & R Q & R Q (P+Q-M)
	
	\\\midrule\midrule
	
	&
	
	\begin{array}{cc}
	2 & 2\\
	& 1
	\end{array}
	
	&
	
	\begin{array}{ccc}
	3 & 3 & 3\\
	& 1 & 
	\end{array}
	
	&
	
	\begin{array}{ccc}
	3 & 3 & 3\\
	& 2 & 2
	\end{array}
	
	&
	
	\begin{array}{ccc}
	3 & 3 & 3\\
	& 2 & 1
	\end{array}
	
	&
	
	\begin{array}{ccc}
	3 & 3 & 2\\
	& 1 &
	\end{array}
	
	\\\midrule
	
	d=0: & R^2 & R^2 & R^2 & R^2 (P+Q-M) & R^2 P
	
	\\\midrule\midrule
	
	&
	
	\begin{array}{ccc}
	3 & 3 & 2\\
	& 2 & 1
	\end{array}
	
	&
	
	\begin{array}{ccc}
	3 & 3 & 1\\
	& 1 &
	\end{array}
	
	&
	
	\begin{array}{ccc}
	3 & 2 & 2\\
	& 1 &
	\end{array}
	
	&
	
	\begin{array}{ccc}
	3 & 2 & 1\\
	& 1 & 
	\end{array}
	
	&
	
	\begin{array}{ccc}
	3 & 3 & 3\\
	& 2 & 2\\
	&   & 1
	\end{array}
	
	\\\midrule
	
	d=0: & R^2 P (P+Q-M) & R^2 Q & R^2 P & R^2 Q & R^3
	
	\\\bottomrule
	\end{array}
	\end{math}
	\label{tab:30CSSPPs}
\end{table}

It turns out that the generating functions $\mathcal{Z}_{\AST}(2,4;M,R,P,Q)$ and $\mathcal{Z}_{\CSSPP}(2,3,d;M,R,P,Q)$ for any $d \in \{0,1,2,3\}$ as well as $\mathcal{Z}_{\AST}(3,1;M,R,P,Q)$ and $\mathcal{Z}_{\CSSPP}(3,0,0;M,R,P,Q)$ coincide. Fischer \cite{Fis} established refined enumerations of alternating sign trapezoids and column strict shifted plane partitions by showing that
\begin{equation*}
\mathcal{Z}_{\AST}(n,l;1,R,P,Q) = \mathcal{Z}_{\CSSPP}(n,l-1,d;1,R,P,Q)
\end{equation*}
for any $d \in \{0,1,\dots,l-1\}$. We extend her proof by adding the fourth statistic and show that the joint distribution of the corresponding statistics on alternating sign trapezoids and on column strict shifted plane partitions coincide:

\begin{theorem}
	\label{thm:maintheorem}
	Let $n,l \ge 1$ and $0 \le d \le l-1$. Then
	\begin{equation*}
	\mathcal{Z}_{\AST}(n,l;M,R,P,Q) = \mathcal{Z}_{\CSSPP}(n,l-1,d;M,R,P,Q).
	\end{equation*}
\end{theorem}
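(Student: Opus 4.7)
The plan is to extend Fischer's three-variable identity $\mathcal{Z}_{\AST}(n,l;1,R,S,T) = \mathcal{Z}_{\CSSPP}(n,l-1,d;1,R,S,T)$ by tracking the additional statistic $Q$ counting $-1$s (respectively, the generalised special parts). Both sides will be rewritten as explicit weighted sums or constant-term expressions that specialise to Fischer's formulas at $Q=1$, and then identified directly.

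First, on the $\AST$ side, I would encode an $(n,l)$-alternating sign trapezoid by the partial column sums read from the top, producing a Gelfand--Tsetlin-like triangular array in which the row constraints of the $\AST$ translate into interlacing-type conditions and the nonzero entries translate into ``strict'' versus ``non-strict'' steps. Under this bijection, the number of $-1$s becomes a natural statistic counting a certain kind of step; the $1$-columns among the $n$ leftmost (respectively, $n$ rightmost) columns and the $10$-columns are read off as boundary data. This will yield a sum-over-triangles expression for $\mathcal{Z}_{\AST}(n,l;Q,R,S,T)$ which, after applying the Fischer-style linear operator acting on polynomials in the top row (as in \cite{Fis}), can be rewritten as a constant term in $n$ formal variables weighted by $Q$, $R$, $S$, $T$.

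Second, on the $\CSSPP$ side, I would apply the standard non-intersecting lattice path interpretation (Lindström--Gessel--Viennot) for column strict shifted plane partitions of class $k = l-1$ with at most $n$ parts in the first row, obtaining a determinantal, and hence also constant-term, generating function. The parts $p_{i,j}$ correspond to path steps, and the classification of each part as counted by $q_d$, $s_d$, $t$, or (in the $d=0$ case) the factor $S+T-Q$ translates into a local weight on that step. After the summation implicit in the determinant, the resulting constant term should match the $\AST$ side up to a simple renormalisation, and the proof of independence from $d \in \{0,\ldots,l-1\}$ will come from a $d$-shift relation showing that incrementing $d$ by $1$ moves the $S$-weight among adjacent diagonals without altering the total.

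The heart of the argument is the equality of the two constant-term expressions. I expect this step to go through by mirroring Fischer's manipulation and identifying the transformation on the $\AST$ side that introduces the extra $Q$ factor with the corresponding reweighting on the $\CSSPP$ side. The main obstacle will be that introducing $Q \neq 1$ breaks the antisymmetrisation exploited in the three-variable proof, so the identity no longer reduces to a clean product formula and must be verified at the level of weighted summands. A secondary technical hurdle is the boundary case $l=1$: the adapted statistics $\tilde{s}, \tilde{t}$ and the Iverson-bracket weight $(S+T-Q)$ attached to a central $10$-column on the $\AST$ side have to be matched against the analogous $(S+T-Q)^{[p_{i,j}=1,\, j-i=1]}$ factor on the $\CSSPP$ side with $d=0$; verifying that these boundary contributions transform consistently under the operator manipulations is where the most delicate bookkeeping will occur.
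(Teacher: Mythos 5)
Your outline is essentially the route the paper takes: on the trapezoid side, partial column sums turn an $(n,l)$-alternating sign trapezoid into a truncated monotone triangle whose bottom-row data record the $1$- and $10$-columns, and an operator formula converts this into a constant-term expression; on the plane-partition side, the Lindstr\"om--Gessel--Viennot lemma gives a determinant. But the step you flag as the ``main obstacle'' is exactly where your plan is missing the decisive idea, and the workaround you anticipate (``verified at the level of weighted summands'') is not viable. The antisymmetrisation does \emph{not} break for $Q\neq 1$: the $Q$-weighted enumeration of monotone triangles due to Fischer and Riegler is already an antisymmetriser/constant-term formula in which the usual factor is deformed to $Q-(1-Q)Y_i+Y_j+Y_iY_j$; the truncation of diagonals is implemented by $Q$-deformed forward and backward difference operators; and a $Q$-deformed antisymmetriser lemma evaluates the symmetrised sum in closed product form. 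A Cauchy-determinant substitution then converts the sum over $1$-column vectors into an explicit $n\times n$ determinant whose $(i,j)$ entry is $R\sum_{k=0}^{i}T^{i-k}\sum_{m=0}^{j}\binom{j}{m}Q^{k-m}\bigl(\binom{k+l-3}{k-m}+\binom{k+l-3}{k-m-1}SQ^{-1}\bigr)+\delta_{i,j}$. The proof is the coincidence of two explicit binomial determinants, not a term-by-term comparison of constant-term expansions.

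On the plane-partition side your local-weight picture is right, but both the matching and the independence of $d$ fall out of a single computation rather than a separate $d$-shift relation: the generating function of one path from $(i,0)$ to $(0,j+l-1)$, obtained by splitting the path at its crossing of the line $y=x+d$ (with an extra factor $S$ when the crossing step is horizontal and a factor $T$ per step on the $x$-axis), collapses via the Chu--Vandermonde identity to exactly the determinant entry above, for every $d\in\{1,\dots,l-1\}$, and a short separate case analysis handles $d=0$. Your reading of the $l=1$ boundary is correct in spirit: when the central column is a $10$-column the corresponding tree has two forced equal bottom entries, the operator formula produces a factor $2-Q$ where the statistics want $S+T-Q$, and this is precisely what motivates the modified weight in the definition of $\mathcal{Z}_{\AST}(n,1;Q,R,S,T)$; on the path side the step $(1,0)\to(0,0)$ carries the matching factor. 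Without the $Q$-deformed operator formula and the $Q$-deformed antisymmetriser lemma, however, the argument stalls exactly at the point you identify.
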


The proof of Theorem~\ref{thm:maintheorem} is organised as follows: We provide explicit determinantal expressions for $\mathcal{Z}_{\AST}(n,l;M,R,P,Q)$ and $\mathcal{Z}_{\CSSPP}(n,l-1,d;M,R,P,Q)$ in Section~\ref{sec:EnumAST} and Section~\ref{sec:CSSPPs}, respectively. These expressions are then each found to be equal to \eqref{eq:QRST-ASTbinom}. The expression for $\mathcal{Z}_{\AST}(n,l;M,R,P,Q)$ is obtained using a bijection between alternating sign trapezoids and truncated monotone triangles, together with operator formulae for the weighted enumeration of monotone triangles due to Fischer and Riegler \cite{FR15}. The expression for $\mathcal{Z}_{\CSSPP}(n,l-1,d;M,R,P,Q)$ is obtained using a bijection between column strict shifted plane partitions and families of nonintersecting lattice paths, together with the Lindstr\"om--Gessel--Viennot lemma for the weighted enumeration of such families.

To conclude the paper, we also present a result for the $2$-enumeration of $(n,l)$-alternating sign trapezoids with $l \geq 2$ in Section~\ref{sec:2Enum}.

\section{Weighted Enumeration of Alternating Sign Trapezoids}
\label{sec:EnumAST}

First, we provide a formula for the generating function of alternating sign trapezoids. For this purpose, we heavily exploit the correspondence between alternating sign trapezoids and certain truncated \emph{monotone triangles}, where the latter are defined below.

\subsection{Correspondence between trees and alternating sign trapezoids}
\label{sec:correspondence}

\begin{definition}
	For a given integer~$n \geq 1$, a \emph{monotone triangle of order~$n$} is an array of integers in a triangular shape with $n$ rows of the following form
\begin{center}
		\begin{tikzpicture}[xscale=2]
		\mt{{$a_{n,1}$,$a_{n,2}$,$a_{n,3}$,\dots,$a_{n,n}$},
			{$a_{n-1,1}$,$a_{n-1,2}$,\dots,$a_{n-1,n-1}$},
			{\dots,\dots,\dots},
			{$a_{2,1}$,$a_{2,2}$},
			{$a_{1,1}$}}
		\end{tikzpicture}
\end{center}
	such that the entries strictly increase along all rows other than the bottom row and weakly increase both along $\nearrow$-diagonals and $\searrow$-diagonals.
\end{definition}

\begin{definition}
	For given integers $p,q \geq 0$ and $n \geq 1$ such that $p+q \le n$ as well as a weakly decreasing sequence $\mathbf{s}=(s_1,s_2,\dots,s_p)$ and a weakly increasing sequence $\mathbf{t}=(t_{n-q+1},t_{n-q+2},\allowbreak \dots,\allowbreak t_n)$ of nonnegative integers, we define an \emph{$(\mathbf{s}$,$\mathbf{t})$-tree} as an array of integers which arises from a  monotone triangle of order~$n$ by truncating the diagonals as follows: for each $1 \leq i \leq p$, we delete the $s_i$ bottom entries of the $i\textsuperscript{th}$ $\nearrow$-diagonal; for each $n-q+1 \leq i \leq n$, we delete the $t_i$ bottom entries of the $i\textsuperscript{th}$ $\searrow$-diagonal. All diagonals are counted from left to right.
	
	We say that an $(\mathbf{s}$,$\mathbf{t})$-tree has \emph{bottom row $\mathbf{k}=(k_1, \dots, k_{n})$} if the following holds true: for all $i$ such that $1 \le i \le n-q$ or $n-q+1 \le i \le n$, the integer $k_i$ is the bottom entry of the $i\textsuperscript{th}$ $\nearrow$-diagonal or the $i\textsuperscript{th}$ $\searrow$-diagonal, respectively.
\end{definition}

Figure~\ref{fig:TreeExample} gives an examples of a $((6,3,1,1),(1,4,4))$-tree. By way of illustration, we indicate the deleted parts of the truncated diagonals by $\color{gray}\bullet$. In addition, we add an edging to the shape of the tree in order to visualise the naming.

\begin{figure}[ht]
	\centering
	\begin{tikzpicture}
	\mt{{\color{gray}$\bullet$,\color{gray}$\bullet$,\color{gray}$\bullet$,\color{gray}$\bullet$,$6$,\color{gray}$\bullet$,\color{gray}$\bullet$,\color{gray}$\bullet$},
		{\color{gray}$\bullet$,\color{gray}$\bullet$,$4$,$5$,$7$,\color{gray}$\bullet$,\color{gray}$\bullet$},
		{\color{gray}$\bullet$,\color{gray}$\bullet$,$4$,$7$,\color{gray}$\bullet$,\color{gray}$\bullet$},
		{\color{gray}$\bullet$,$3$,$6$,\color{gray}$\bullet$,\color{gray}$\bullet$},
		{\color{gray}$\bullet$,$3$,$8$,$9$},
		{\color{gray}$\bullet$,$5$,$8$},
		{$1$,$7$},
		{$6$}}
	
	\draw (4.25,0.25) --++ (-2,0) --++ (1,1) --++ (-1,0) --++ (1.5,1.5) --++ (-1,0) --++ (1.25,1.25) --++ (2.25,-2.25) --++ (-2,0) --++ (1.5,-1.5) --++ (-1,0) --++ (0.5,-0.5) --++ (-1.5,0) --++ (0.5,0.5) ;
	\end{tikzpicture}
	\caption{$((6,3,1,1),(1,4,4))$-tree with eight rows ($n=8$) and bottom row $(1,3,4,5,6,7,8,9)$}
	\label{fig:TreeExample}
\end{figure}
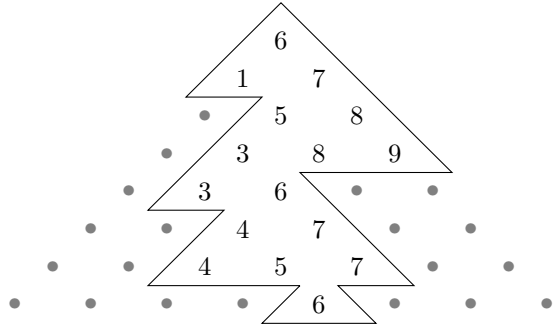

The definition of monotone triangles in the present paper differs from the original definition given by Mills, Robbins and Rumsey in \cite{MRR83}, where they also impose strict increase on the bottom row.  Although we predominantly consider strictly increasing bottom rows, monotone triangles and trees with weakly increasing bottom rows do appear in Section~\ref{sec:ASTZswithprescribedcolumnvector}, where we investigate quasi alternating sign triangles. Note that the monotonicity conditions on the diagonals of monotone triangles guarantee that the bottom row is at least weakly increasing.

Next, we show how to transform an alternating sign trapezoid into a tree: Given an $(n,l)$-alternating sign trapezoid such that $l \ge 2$, we pad the array with additional zeroes to obtain a rectangular shape of size $n \times (2n+l-2)$. To each entry, we add the entries in the same column above it. This yields an array  consisting merely of $0$s and $1$s with exactly $i$ $1$s in the $i\textsuperscript{th}$ row. We want to record the positions of these $1$s. Therefore, we number the columns from $-n$ to $n+l-3$ from left to right and list the corresponding numbers row by row in a triangular shape. By removing the entries that correspond to the initially added ones, we finally obtain a truncated monotone triangle or tree. In Figure~\ref{fig:Tree}, we exemplify the construction with the $(4,4)$-alternating sign trapezoid in Figure~\ref{fig:AST}.
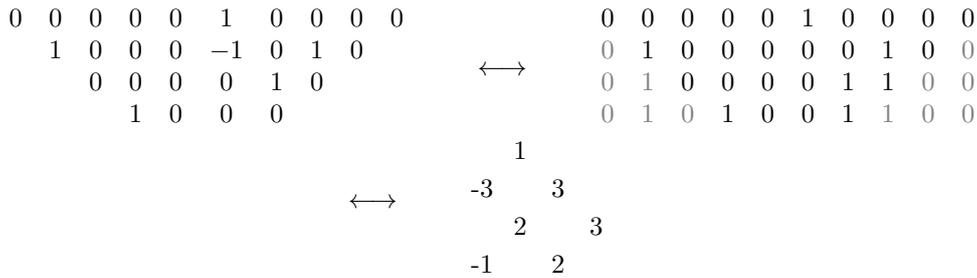
\begin{figure}[ht]
	\centering
	\begin{gather*}
	\begin{array}{cccccccccc}
	0 & 0 & 0 & 0 & 0 & 1 & 0 & 0 & 0 & 0\\
	 & 1 & 0 & 0 & 0 & -1 & 0 & 1 & 0 &  \\
	 &  & 0 & 0 & 0 & 0 & 1 & 0 &  &  \\
	 &  &  & 1 & 0 & 0 & 0 &  &  &  \\
	\end{array}
	\qquad\longleftrightarrow\qquad
	\begin{array}{cccccccccc}
	0 & 0 & 0 & 0 & 0 & 1 & 0 & 0 & 0 & 0\\
	{\color{gray}0} & 1 & 0 & 0 & 0 & 0 & 0 & 1 & 0 & {\color{gray}0} \\
	{\color{gray}0} & {\color{gray}1} & 0 & 0 & 0 & 0 & 1 & 1 & {\color{gray}0}  & {\color{gray}0} \\
	{\color{gray}0} & {\color{gray}1} & {\color{gray}0} & 1 & 0 & 0 & 1 & {\color{gray}1} & {\color{gray}0} & {\color{gray}0} \\
	\end{array}
	\\
	\longleftrightarrow
	\begin{tikzpicture}[baseline=(current bounding box),outer sep=0pt,inner sep=0pt]
		\mt{{,-1,2,},{,2,3},{-3,3},{1}}
		\end{tikzpicture}
	\end{gather*}
	\caption{$(4,4)$-alternating sign trapezoid with $1$-column vector $(-3,-1,1,2)$ and corresponding $\left((2),(1)\right)$-tree with bottom row $(-3,-1,2,3)$}
	\label{fig:Tree}
\end{figure}

To illustrate the main features of this construction, we number (differently than in Definition~\ref{def:ASTZs}) the $n$ leftmost columns of an $(n,l)$-alternating sign trapezoid from $-n$ to $-1$ and the $n$ rightmost columns from $1$ to $n$. The \emph{$1$-column vector} $\mathbf{c}=\left(c_1,\dots,c_n\right)$ records the positions of the $1$-columns of the alternating sign trapezoid; hence, $-n \le c_1 < \dots < c_m < 0 < c_{m+1} < \dots < c_n\le n$ for some $0 \le m \le n$.  The construction above yields an $(\mathbf{s}$,$\mathbf{t})$-tree with bottom row $(c_1,\dots,c_m,c_{m+1}+l-3,\dots,c_{n}+l-3)$ such that $\mathbf{s} = ( -c_1-1,\dots,-c_m-1 )$ and $\mathbf{t} = ( c_{m+1}-1,\dots,c_n-1 )$.

Regarding the statistics of alternating sign trapezoids, we make the following observations:
\begin{compactitem}
	\item A $-1$ in the alternating sign trapezoid corresponds to an entry $a_{i,j}$ in the tree which has two neighbouring entries $a_{i+1,j}$ and $a_{i+1,j+1}$ in the row below such that $a_{i+1,j} < a_{i,j} < a_{i+1,j+1}$. This observation motivates the definition of the statistic $\mu(T)$ on trees~$T$ with $n$ rows:
	\begin{equation*}
	\mu(T) \coloneqq \text{\# entries $a_{i,j}$ such that $i<n$ and $a_{i+1,j} < a_{i,j} < a_{i+1,j+1}$.}
	\end{equation*}
	\item The positions of  $1$-columns are reflected in the bottom row of the tree.
	\item $10$-columns cause the corresponding diagonals in the tree to have duplicated bottom entries.
\end{compactitem}

\subsection{Enumeration of trees}

To enumerate monotone triangles and trees, we use operator formulae and constant term expressions. To this end, we need to introduce several operators and notations. First, we define the \emph{symmetriser}~$\sym$ and the \emph{antisymmetriser}~$\asym$ of a function $f(X_1,\dots,X_n)$. Let $\mathfrak{S}_n$ be the symmetric group of degree $n$. Then 
\begin{align*}
\sym_{X_1,\dots,X_n} f(X_1,\dots,X_n) &\coloneqq \sum_{\sigma\in\mathfrak{S}_n} f(X_{\sigma(1)},\dots,X_{\sigma(n)})\,\text{and}\\
\asym_{X_1,\dots,X_n} f(X_1,\dots,X_n) &\coloneqq \sum_{\sigma\in\mathfrak{S}_n} \sgn(\sigma) f(X_{\sigma(1)},\dots,X_{\sigma(n)}).
\end{align*}
We use $\sym_{\mathbf{X}}$ and $\asym_{\mathbf{X}}$ as an abbreviation if $\mathbf{X}=(X_1,\dots,X_n)$ is clear from the context. Furthermore, $\ct_{\mathbf{X}} f(\mathbf{X})=\ct_{X_1,\dots,X_n} f(X_1,\dots,X_n)$ denotes the constant term of the function $f$ with respect to the variables $X_1,\dots,X_n$. Finally, we define the \emph{shift operator}~$\E_X$, the \emph{forward difference operator}~$\fd_X$ and the \emph{backward difference operator}~$\delta_X$:  
\begin{align*}
\E_X \left[f(X)\right] &\coloneqq f(X+1),\\
\fd_X &\coloneqq \E_X - \id,\\
\delta_X &\coloneqq \id - \E_X^{-1},
\end{align*}
where $\id$ denotes the standard identity operator. The \emph{$M$-forward difference operator}~$\Qfd_X$ and the \emph{$M$-backward difference operator}~$\Qbd_X$ are defined as follows:
\begin{align*}
\Qfd_X &\coloneqq (M\id - (1-M)\fd_X)^{-1}\fd_X,\\
\Qbd_X &\coloneqq (M\id - (M-1)\delta_X)^{-1}\delta_X.
\end{align*}
They generalise the previous difference operators as they reduce to $\fd_X$ and $\bd_X$, respectively, if we set $M=1$. We use the notation $\E_x \left[ f(x) \right] \coloneqq \left. \E_X \left[ f(X) \right] \right|_{X=x}$ for a given a variable~$X$ and an integer~$x$. This abbreviatory notation is correspondingly used for other operator expressions. Note that for a function of several variables, any of these operators which is associated with a variable~$X$ commutes with any of these operators which is associated with a different variable~$Y$.

Fischer and Riegler \cite{FR15} provided a weighted enumeration of monotone triangles:

\begin{theorem}
	\label{thm:Q-M}
	The generating function~$\mathcal{Z}_{\MT} (n, \mathbf{k};M)$ of monotone triangles $T$ of order $n$ with strictly increasing bottom row $\mathbf{k}=(k_1, \dots,\allowbreak k_{n})$ with respect to the weight~$M^{\mu(T)}$ is given by 
	\begin{equation}
	\label{eq:Q-M}
	\ct_{\mathbf{Y}} \left( \asym_{\mathbf{Y}} \left( \prod_{i=1}^n \left(1+Y_i\right)^{k_i} \prod_{1 \le i < j \le n} \left( M - (1-M)Y_i + Y_j + Y_i Y_j \right) \right) \prod_{1 \le i < j \le n} \left( Y_j - Y_i \right)^{-1} \right).
	\end{equation}
\end{theorem}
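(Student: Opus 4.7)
The plan is to proceed by induction on $n$, taking as base case $n = 1$, where the formula collapses to $\ct_{Y_1}\bigl((1+Y_1)^{k_1}\bigr) = 1$; this matches the fact that there is exactly one monotone triangle of order~$1$ with prescribed bottom entry~$k_1$, carrying no special entries and hence weight~$1$.

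For the inductive step, I would first establish a recursion by summing over the penultimate row. Every monotone triangle of order~$n$ with bottom row $\mathbf{k}=(k_1,\dots,k_n)$ decomposes uniquely into a monotone triangle of order~$n-1$ with some bottom row $\mathbf{k}' = (k'_1,\dots,k'_{n-1})$ satisfying $k_i \le k'_i \le k_{i+1}$ and $k'_1 < \dots < k'_{n-1}$. An entry $k'_i$ of this penultimate row contributes an extra factor of $Q$ to the weight exactly when $k_i < k'_i < k_{i+1}$ (both inequalities strict), so
\begin{equation*}
M_n(\mathbf{k}) = \sum_{\substack{k_i \le k'_i \le k_{i+1}\\ k'_1 < \dots < k'_{n-1}}} \prod_{i=1}^{n-1} Q^{[k_i < k'_i < k_{i+1}]}\, M_{n-1}(\mathbf{k}').
\end{equation*}

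The central analytic step is a single-variable identity that expresses $\sum_{y=a}^{b} Q^{[a<y<b]} f(y)$ as an explicit polynomial in the $Q$-forward and $Q$-backward difference operators $\Qfd_b$ and $\Qbd_a$ applied to $f$, specialising to the Fischer--Riegler summation identity when $Q=1$. After the substitution $Y_i = \E_{k_i} - 1$, this operator expression turns into the bilinear factor $Q - (1-Q)Y_i + Y_j + Y_iY_j$ that appears in~\eqref{eq:Q-M}. Applying this coordinate-by-coordinate to the inductive hypothesis for $M_{n-1}$ produces an expression summed over weakly ordered tuples~$\mathbf{k}'$ rather than strictly ordered ones; inserting the antisymmetrizer and dividing by the Vandermonde $\prod_{i<j}(Y_j - Y_i)$ then relaxes the sum to a free one and thereby delivers the constant-term formula in~\eqref{eq:Q-M}.

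The principal obstacle is twofold. First, one must derive the single-variable $Q$-summation identity with care, correctly handling the \emph{unweighted} boundary contributions at $y=a$ and $y=b$, and verify that the resulting operator matches the bilinear factor stated in the theorem under the change of variables $Y_i = \E_{k_i} - 1$. Second, passing from the strictly ordered sum over $\mathbf{k}'$ to the free sum implicit in the constant-term expression via antisymmetrization is a Lindstr\"om-type cancellation, but the $Q$-weighted factors must be shown to cancel cleanly on the forbidden diagonal $k'_i = k'_{i+1}$ without producing spurious $Q$-dependent residues. Verifying this cancellation is where the symmetric algebraic structure of the factor $Q - (1-Q)Y_i + Y_j + Y_iY_j$ must be exploited, and it is the step most likely to require the most delicate bookkeeping.
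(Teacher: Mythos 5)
The paper does not prove Theorem~\ref{thm:Q-M}: it quotes the result from Fischer and Riegler \cite{FR13}, so there is no internal proof to compare against. Your plan --- induction on $n$ via the interlacing recursion over the penultimate row, a $Q$-deformed summation identity, and antisymmetrization to relax the strictly increasing sum to a free one --- is exactly the strategy of the cited source, and the base case and the recursion $M_n(\mathbf{k}) = \sum_{\mathbf{k}'} \prod_i Q^{[k_i < k_i' < k_{i+1}]} M_{n-1}(\mathbf{k}')$ are correctly set up (an entry of the penultimate row is ``special'' precisely when it lies strictly between its two neighbours below, matching the $-1$-count under the AST--tree correspondence).

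As a proof, however, the proposal stops short at exactly the two points you flag, and one of them is misdescribed. The factor $Q-(1-Q)Y_i+Y_j+Y_iY_j$ is an irreducibly two-variable operator (at $Q=1$ it is $\id+\fd_{k_j}\E_{k_i}$), and \eqref{eq:Q-M} requires the product of such factors over \emph{all} pairs $i<j$, not only consecutive ones; a coordinatewise application of a one-variable identity for $\sum_{y=a}^{b}Q^{[a<y<b]}f(y)$ cannot by itself generate this. What the argument actually needs is (i) an extension of the interlacing sum to arbitrary, not necessarily increasing, bounds so that it telescopes --- this is what makes the cancellation on the forbidden diagonal $k_i'=k_{i+1}'$ work, since that coincidence can only occur at the shared endpoint $k_{i+1}$ of two adjacent summation intervals --- and (ii) an inductive verification that this extended summation operator, applied to the order-$(n-1)$ constant-term expression, commutes past the antisymmetrizer and the Vandermonde denominator and reproduces the full product over pairs (the non-adjacent factors come from the induction hypothesis, not from the new summation). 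These two lemmas are the substance of \cite{FR13}; until they are proved, what you have is a correct outline of the known proof rather than a proof.
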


To obtain a enumeration formula for trees, we need to apply generalised difference operators to $\mathcal{Z}_{\MT} (n, \mathbf{k};M)$. The crucial observation is that if we repeatedly apply $-\fd_{k_i}$ and $\delta_{k_i}$ to $\mathcal{Z}_{\MT} (n, \mathbf{k};1)$, we enumerate monotone triangles with truncated diagonals. By using the aforementioned generalisations of the difference operators, \cite[Theorem 5]{Fis18} implies the following result:
\begin{theorem}
	\label{thm:genfunktrees}
	The generating function of $(\mathbf{s}$,$\mathbf{t})$-trees $T$ with $\mathbf{s}=(s_1,s_2,\dots,s_p)$, $\mathbf{t}=(t_{n-q+1},t_{n-q+2},\allowbreak \dots,\allowbreak t_n)$ and strictly increasing bottom row $\mathbf{k}=(k_1, \dots, k_{n})$ with respect to the weight~$M^{\mu(T)}$ is given by
	\begin{equation}
	\label{eq:genfunktrees}
	\prod_{i=1}^{p} \left( -\Qfd_{k_i} \right)^{s_i} \prod_{i=n-q+1}^{n} \Qbd_{k_i}^{t_i} \, \left[ \mathcal{Z}_{\MT} (n, \mathbf{k};M) \right].
	\end{equation}
\end{theorem}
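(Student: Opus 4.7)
The plan is to deduce Theorem \ref{thm:genfunktrees} from Theorem \ref{thm:Q-M} by induction on the total truncation size $\sum_{i=1}^{p} s_i + \sum_{i=n-q+1}^{n} t_i$. The base case is $\mathbf{s}=\mathbf{0}$ and $\mathbf{t}=\mathbf{0}$, which is precisely Theorem \ref{thm:Q-M}, since an $(\mathbf{s},\mathbf{t})$-tree with no truncations is just a monotone triangle of order $n$ with bottom row $\mathbf{k}$. For the inductive step, I would show that each further truncation of the bottom entry of the $i^{\text{th}}$ $\nearrow$-diagonal (respectively $\searrow$-diagonal) corresponds precisely to applying one copy of $-\Qfd_{k_i}$ (respectively $\Qbd_{k_i}$) to the $Q$-weighted generating function already obtained. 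Since, as noted in the excerpt, all of the operators commute, we may freely perform truncations one at a time, in any order, and thus the claim reduces to a single-truncation statement which is exactly \cite[Theorem 5]{Fis18}.

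For the single-truncation step, the key identity to verify is the following: if $G(k_1,\dots,k_n)$ denotes the $Q$-weighted enumeration of some $(\mathbf{s},\mathbf{t})$-tree with bottom row $\mathbf{k}$, then the generating function of the tree obtained by removing the bottom entry of the $i^{\text{th}}$ $\nearrow$-diagonal, with bottom row $(k_1,\dots,k_{i-1},k_{i+1},\dots)$ relabelled as appropriate, is $(-\Qfd_{k_i}) G$. The combinatorial content is that, having fixed the tree above the entry $k_i$, one must sum over all admissible values of $k_i$, weighted by $Q^{(\text{new }{-1}\text{s created by reinstating } k_i)}$. The geometric-series expansion
\[
-\Qfd_{X} = -Q^{-1}\sum_{m\ge 0}\left(\tfrac{1-Q}{Q}\right)^m \fd_X^{m+1}
\]
is exactly the right device: each application of $\fd_X$ implements a partial summation/finite difference on the bottom-row variable, and the $(1-Q)^m Q^{-(m+1)}$ coefficients account for whether the newly inserted entry $k_i$ and its relative position with respect to the entries immediately above force a $-1$ at the entry above. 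The analogous statement holds for $\Qbd_{k_i}$ on $\searrow$-diagonals; the asymmetry of the operator definitions reflects the asymmetry of the $(-1)$-pattern condition under left-vs.-right local shifts. Specialising to $Q=1$, both $\Qfd$ and $\Qbd$ collapse to the usual $\fd$ and $\delta$, recovering the unweighted truncation identity used implicitly in \cite{ABF}.

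The hard part is justifying the single-truncation identity combinatorially: one must carefully track, under the bijective correspondence between monotone triangles and sequences of row extensions, how the $q(T)$-statistic behaves when an entire bottom entry of a diagonal is summed away, and show that this matches the algebraic action of $\Qfd$ or $\Qbd$ on the operator formula \eqref{eq:Q-M}. Since this computation has already been carried out in \cite[Theorem 5]{Fis18}, the remaining work here is only to iterate it, using commutativity of the operators to handle both kinds of truncations (for $\nearrow$- and $\searrow$-diagonals) simultaneously, which yields the product formula \eqref{eq:genfunktrees}.
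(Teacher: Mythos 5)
Your proposal is correct and follows essentially the same route as the paper, which likewise obtains the formula by combining the operator formula of Theorem~\ref{thm:Q-M} with the truncation result of \cite[Theorem 5]{Fis18}, the latter carrying all the combinatorial substance about how $-\Qfd_{k_i}$ and $\Qbd_{k_i}$ implement removal of bottom diagonal entries while tracking the $Q^{q(T)}$-weight. Your extra scaffolding (induction on the total truncation size, using commutativity of the operators) is a harmless elaboration of what the paper treats as an immediate consequence of that cited theorem.
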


Note that Theorems~\ref{thm:Q-M} and \ref{thm:genfunktrees} are also true for weakly increasing bottom rows $\mathbf{k}$ when we consider the straight enumeration of monotone triangles and trees by setting $M=1$.

\subsection{Enumeration of alternating sign trapezoids with prescribed \texorpdfstring{\boldmath$1$}{1}-column vector}
\label{sec:ASTZswithprescribedcolumnvector}

We use the correspondence between alternating sign trapezoids and trees to obtain enumeration formulae. First, we consider alternating sign trapezoids with prescribed $1$-column vectors. For this purpose, we make use of Theorems~\ref{thm:Q-M} and \ref{thm:genfunktrees}.

\begin{lemma}
	The generating function of $(n,l)$-alternating sign trapezoids~$A$ with $1$-column vector $\mathbf{c}$ with respect to the weight~$M^{\mu(A)} P^{p(A)} Q^{q(A)}$ is given by
	\begin{equation}
	\label{eq:QST-AST1columns}
	\prod_{i=1}^{m} \left( \id - \frac{P}{M} \delta_{c_i} \right) \left( \id + \Qfd_{c_i} \right) \left( -\Qfd_{c_i} \right)^{-c_i-1} \hspace*{-1ex} \prod_{i=m+1}^{n} \left( \id + \frac{Q}{M} \fd_{c_i} \right) \left( \id - \Qbd_{c_i} \right) \Qbd_{c_i}^{c_i-1} \left[\mathcal{Z}_{\MT} (n, \mathbf{\tilde{c}};M) \right],
	\end{equation}
	where $\mathbf{\tilde c}=\left(c_1,\dots,c_m,c_{m+1}+l-3,\dots,c_{n}+l-3\right)$.
\end{lemma}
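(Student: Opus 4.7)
The plan is to enrich the tree enumeration formula of Theorem~\ref{thm:genfunktrees} with additional operator factors that track the $10$-column statistics $s(A)$ and $t(A)$.

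I would begin from the bijective correspondence recalled in the excerpt: an $(n,l)$-AST $A$ with $1$-column vector $\mathbf{c}=(c_1,\dots,c_n)$ corresponds to an $(\mathbf{s},\mathbf{t})$-tree with bottom row $\tilde{\mathbf c}$, where $\mathbf{s}=(-c_1-1,\dots,-c_m-1)$ and $\mathbf{t}=(c_{m+1}-1,\dots,c_n-1)$. Under this bijection $q(A)$ matches the tree statistic of Theorem~\ref{thm:genfunktrees}, and a $10$-column at position $c_i$ translates precisely into the $i$-th truncated diagonal having its two bottom entries equal. Consequently Theorem~\ref{thm:genfunktrees} immediately yields the ``bare'' $Q$-weighted count
\[
\prod_{i=1}^{m} (-\Qfd_{c_i})^{-c_i-1} \prod_{i=m+1}^{n} \Qbd_{c_i}^{c_i-1}\, M_n(\tilde{\mathbf c}),
\]
which is exactly the inner part of the claimed formula.

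To incorporate $S$ and $T$, I would treat each diagonal separately. For a left diagonal the identity $(\id+\Qfd_{c_i})(-\Qfd_{c_i})^{-c_i-1} = (-\Qfd_{c_i})^{-c_i-1} - (-\Qfd_{c_i})^{-c_i}$ realises the outer factor as a telescoping difference of tree enumerations with $s_i=-c_i-1$ and $s_i=-c_i$. The natural map that forgets a duplicated bottom entry on diagonal~$i$ identifies the $s_i=-c_i-1$ trees with duplication (i.e., $10$-columns) with the $s_i=-c_i$ trees, preserving $Q$-weight. Hence the difference is the $Q$-weighted count of ASTs in which the $c_i$ column is \emph{not} a $10$-column. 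The prefactor $\id-\tfrac{S}{Q}\delta_{c_i}$ then re-injects the missing $10$-column contribution with the correct weight $S$ via the shift $c_i\mapsto c_i-1$, the factor $S/Q$ compensating for the absence of a $-1$-pattern in the duplicated configuration. A symmetric argument, interchanging $\Qfd\leftrightarrow\Qbd$ and $\delta\leftrightarrow\fd$, handles the right diagonals and the statistic $t(A)$.

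The main obstacle is the fine bookkeeping of $Q$-weights across the ``forget-a-duplicate'' bijection, and in particular checking that the shift $c_i\mapsto c_i-1$ lines up correctly with the exponent $-c_i-1$ of $-\Qfd_{c_i}$ to produce precisely the duplicated configurations, with no spurious terms. I would confirm this by expanding the operator product explicitly and comparing term by term with direct enumerations such as in Table~\ref{tab:24ASTs}; once the single-diagonal statement is established, the commutativity of the operators involved immediately assembles the full product formula.
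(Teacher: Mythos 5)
Your proposal is correct and follows essentially the same route as the paper: the paper likewise reduces to the tree enumeration of Theorem~\ref{thm:genfunktrees}, encodes a $10$-column by one extra generalised difference operator (and a non-$10$-column by the complementary factor $\id+\Qfd_{c_i}$, resp.\ $\id-\Qbd_{c_i}$), and then assembles the $S$- and $T$-weights — there via elementary symmetric functions over the subsets of $10$-column positions, which is just the expanded form of your per-diagonal factorisation. The ``fine bookkeeping'' you defer is exactly the identity $-\Qfd_X=-Q^{-1}\bd_X(\id+\Qfd_X)$ (and its mirror), which the paper states and which indeed makes your prefactor $\id-\tfrac{S}{Q}\delta_{c_i}$ contribute $S\,(-\Qfd_{c_i})^{-c_i}$ as you intend.
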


\begin{proof}
	First, let us assume $l \geq 2$. In order to enumerate $(n,l)$-alternating sign trapezoids with $1$-column vector $\mathbf{c}$ such that $-n \le c_1 < \dots < c_m < 0 < c_{m+1} < \dots < c_n\le n$ for some $0 \le m \le n$, it suffices to enumerate $(( -c_1-1,\dots,-c_m-1 ),( c_{m+1}-1,\dots,c_n-1 ))$-trees with bottom row $\mathbf{\tilde c}=(c_1,\dots,c_m,c_{m+1}+l-3,\dots,c_{n}+l-3)$.
	
	Let $L_{10} \subseteq \{1,\dots,m\}$ and $R_{10} \subseteq \{m+1,\dots,n\}$. Consider an $(n,l)$-alternating sign trapezoid with $1$-column vector $\mathbf{c}$ such that $c_i$ is a  $10$-column if and only if $i \in L_{10} \cup R_{10}$. By the correspondence between alternating sign triangles and trees, we have to apply a generalised difference operator for every $10$-column, namely $-\Qfd_{c_i}$ if $i \in L_{10}$ and $\Qbd_{c_i}$ if $i \in R_{10}$. For every every other column, we apply $\id+\Qfd_{c_i}$ or $\id-\Qbd_{c_i}$, respectively. In total, we obtain the following generating function by Theorem~\ref{thm:genfunktrees}:
	\begin{multline}
	 \label{eq:genfunkASTcvecor}
		\prod_{i \in L_{10}} \left( -\Qfd_{c_i} \right) \prod_{\substack{1 \le i \le m,\\i \notin L_{10}}} \left( \id+\Qfd_{c_i} \right) \prod_{i=1}^{m} \left( -\Qfd \right)^{-c_i-1}\\
		\times \prod_{i \in R_{10}} \Qbd_{c_i} \prod_{\substack{m+1 \le i \le n,\\i \notin R_{10}}} \left( \id-\Qbd_{c_i} \right) \prod_{i=m+1}^{n} \Qbd^{c_i-1} \left[ \mathcal{Z}_{\MT} (n, \mathbf{\tilde{c}};M) \right] .
	\end{multline}
	Since $-\Qfd_X = -M^{-1} \bd_X (\id+\Qfd_X)$ and $\Qbd_X = M^{-1} \fd_X (\id-\Qbd_X)$, \eqref{eq:genfunkASTcvecor} is equal to
	\begin{equation*}
		\prod_{i \in L_{10}} \left( -M^{-1} \bd_{c_i} \right) \prod_{i=1}^{m} \left( \id+\Qfd_{c_i} \right) \left( -\Qfd \right)^{-c_i-1}
		\prod_{i \in R_{10}} \left( M^{-1} \fd_{c_i} \right) \prod_{i=m+1}^{n} \left( \id-\Qbd_{c_i} \right) \Qbd^{c_i-1} \left[ \mathcal{Z}_{\MT} (n, \mathbf{\tilde{c}};M) \right] .
	\end{equation*}

	As an intermediate step, we enumerate all alternating sign trapezoids with $p$ $10$-columns on the left side and $q$ on the right side, that is, $\left|L_{10}\right| = p$ and $\left|R_{10}\right| = q$. The corresponding generating function is given by
	\begin{multline*}
	e_p \left( -M^{-1}\bd_{c_1},\dots,-M^{-1}\bd_{c_m} \right)  \prod_{i=1}^{m} \left( \id+\Qfd_{c_i} \right) \left( -\Qfd \right)^{-c_i-1}\\
	\times e_q \left( M^{-1}\fd_{c_{m+1}},\dots,M^{-1}\fd_{c_n} \right) \prod_{i=m+1}^{n} \left( \id-\Qbd_{c_i} \right) \Qbd^{c_i-1} \left[ \mathcal{Z}_{\MT} (n, \mathbf{\tilde{c}};M) \right] ,
	\end{multline*}
	where $e_p$ denotes the $p\textsuperscript{th}$ elementary symmetric function. To sum over all possible positions of $10$-columns, we notice that $e_p \left( X_1,\dots,X_n \right)$ is the coefficient of $P^p$ in $\prod_{i=1}^n \left( 1+ P X_i \right)$. Hence, we finally obtain \eqref{eq:QST-AST1columns}.
	
	\begin{figure}[ht]
		\centering
		\begin{subfigure}{\textwidth}
			\begin{equation*}
				\begin{array}{ccccccccc}
					0 & 0 & 0 &  0 & 0 & 0 & 0 & 1 & 0 \\
					& 0 & 0 &  1 & 0 & 0 & 0 & 0 &   \\
					&   & 1 & -1 & 0 & 0 & 1 &   &   \\
					&   &   &  0 & 0 & 1 &   &   &   \\
					&   &   &    & 1 &   &   &   &   \\
				\end{array}
				\longleftrightarrow
				\begin{tikzpicture}[baseline=(current bounding box),outer sep=0pt,inner sep=0pt]
					\mt{{,$-1$,,,},{,$0$,,},{$-3$,$1$,},{$-2$,$2$},{$2$}}
				\end{tikzpicture}
			\end{equation*}
			\caption{$\mathbf{c}=(-3,-1,2,3,4)$\\bottom row $(-3,-1,0,1,2)$}
			\label{fig:caseA}
		\end{subfigure}\\\bigskip
		\begin{subfigure}{\textwidth}
			\begin{equation*}
				\begin{array}{ccccccccc}
					0 & 0 & 0 &  0 & 0 & 0 & 0 & 1 & 0 \\
					& 0 & 0 &  1 & 0 & 0 & 0 & 0 &   \\
					&   & 1 & -1 & 0 & 0 & 1 &   &   \\
					&   &   &  0 & 0 & 1 &   &   &   \\
					&   &   &    & 0 &   &   &   &   \\
				\end{array}
				\longleftrightarrow
				\begin{tikzpicture}[baseline=(current bounding box),outer sep=0pt,inner sep=0pt]
					\mt{{,$-1$,,,},{,$0$,,},{$-3$,$1$,},{$-2$,$2$},{$2$}}
				\end{tikzpicture}
			\end{equation*}
			\caption{$\mathbf{c}=(-3,1,2,3,4)$\\bottom row $(-3,-1,0,1,2)$}
			\label{fig:caseB}
		\end{subfigure}\\\bigskip
		\begin{subfigure}{\textwidth}
			\begin{equation*}
				\begin{array}{ccccccccc}
					0 & 0 & 0 &  0 & 0 & 0 & 0 & 1 & 0 \\
					& 0 & 0 &  1 & 0 & 0 & 0 & 0 &   \\
					&   & 1 & -1 & 1 & 0 & 0 &   &   \\
					&   &   &  0 & 0 & 1 &   &   &   \\
					&   &   &    & 0 &   &   &   &   \\
				\end{array}
				\longleftrightarrow
				\begin{tikzpicture}[baseline=(current bounding box),outer sep=0pt,inner sep=0pt]
					\mt{{,$-1$,$-1$,,},{,$-1$,$0$,},{$-3$,$-1$,},{$-2$,$2$},{$2$}}
				\end{tikzpicture}
			\end{equation*}
			\caption{$\mathbf{c}=(-3,-1,1,2,4)$\\bottom row $(-3,-1,-1,0,2)$}
			\label{fig:caseC}
		\end{subfigure}
		\caption{Three $(5,1)$-alternating sign trapezoids with corresponding trees}
		\label{fig:Examplel1}
	\end{figure}

Finally, let us have a look at the case $l=1$. We distinguish three sets of quasi alternating sign triangles which are all illustrated by an example in Figure~\ref{fig:Examplel1}: \subref{fig:caseA} those with bottom entry $1$, \subref{fig:caseB} those with bottom entry $0$ and central column sum $0$, and \subref{fig:caseC} those with bottom entry $0$ and central column sum $1$.

We generalise the construction of transforming $(n,l)$-alternating sign trapezoids into monotone triangles for the case $l \geq 2$ presented in Section~\ref{sec:correspondence} to $l=1$ as follows; this generalisation first appeared in the proof of \cite[Lemma 2.2]{Fis}. As before, we add zeroes to the quasi alternating sign triangle of order~$n$ such that we obtain a rectangular $n \times (2n-1)$ array. We number the columns from $-n$ to $n-2$ from left to right and compute the array of partial column sums. There are exactly $i$ $1$s in the $i$\textsuperscript{th} row except in the last row of the cases \subref{fig:caseB} and \subref{fig:caseC}, where the number of $1$s is $n-1$. In order to record the position of the $1$s in a triangular shape, we compensate for the missing $1$ by adding a $-1$ in the bottom row of the monotone triangle. In the case \subref{fig:caseC}, this results in violating the strict monotonicity condition of the bottom row; the rows are weakly increasing instead. These triangular arrays are called \emph{Gelfand-Tsetlin patterns}. By removing the entries that correspond to the additional added zeroes, we obtain trees as demonstrated in Figure~\ref{fig:Examplel1}.

In order to associate $\mathbf{c}$ to any quasi alternating sign triangle of order~$n$, we number the $n$ leftmost columns from $-n$ to $-1$ and the $n$ rightmost columns from $1$ to $n$ and list the position of the $1$-columns. By our numbering, the central column is equipped with two labels, namely $-1$ and $1$. In the case~\subref{fig:caseA}, we record the position of the central $1$-column by $-1$, whilst we record it twice by $-1$ and $1$ in the case~\subref{fig:caseC}. In the remaining case~\subref{fig:caseB}, we add the entry $1$ to $\mathbf{c}$ to ensure that $\mathbf{c}$ consists of $n$ entries in total. Thus, we obtain $1$-column vectors $\mathbf{c}=\left(c_1,\dots,c_n\right)$ with $-n \le c_1 < \dots < c_m < 0 < c_{m+1} < \dots < c_n\le n$ for some $0 \le m \le n$ such that $(c_1,\dots,c_m,c_{m+1}-2,\dots,c_{n}-2)$ is the bottom row of the corresponding tree.

To sum up, the cases \subref{fig:caseA}, \subref{fig:caseB} and \subref{fig:caseC} correspond to the following sets of trees: the first one corresponds to the case $c_m = -1$ and $c_{m+1} > 1$, the second one corresponds to the case $c_m < -1$ and $c_{m+1} = 1$, and the third one corresponds to the case $c_m = -1$ and $c_{m+1} = 1$. If $c_m = -1$ and $c_{m+1} = 1$, then all but two entries in the bottom row of the Gelfand-Tsetlin pattern are truncated. The two remaining entries are identically equal to $-1$ and force an entry in the upper row to be $-1$, too. In this case, \eqref{eq:genfunktrees} produces a factor $2-M$, whereas \eqref{eq:QST-AST1columns} gives a factor $P+Q-M$. This justifies the modification of the weights in the generating function of $(n,l)$-alternating sign trapezoids for $l=1$. \end{proof}

Instead of evaluating the polynomial~\eqref{eq:QST-AST1columns} at $\mathbf{\tilde c}$, we can shift the argument by suitable operators and evaluate it at $\mathbf{x}=\mathbf{0}$. Therefore, \eqref{eq:QST-AST1columns} is equal to
\begin{multline*}
\prod_{i=1}^{m} \E_{x_i}^{c_i} \left( \id - \frac{P}{M} \delta_{c_i} \right) \left( \id + \Qfd_{c_i} \right) \left( -\Qfd_{c_i} \right)^{-c_i-1}\\
\times \left. \prod_{i=m+1}^{n} \E_{x_i}^{c_i+l-3} \left( \id + \frac{Q}{M} \fd_{c_i} \right) \left( \id - \Qbd_{c_i} \right) \Qbd_{c_i}^{c_i-1} \left[ \mathcal{Z}_{\MT} (n, \mathbf{x};M) \right] \right|_{\mathbf{x}=\mathbf{0}},
\end{multline*}
which we rewrite as
\begin{multline}
\label{eq:QST-AST1columns-shifted}
\prod_{i=1}^{m} \E_{x_i}^{-1} \left( M \id -(P-M)\fd_{x_i} \right) \left( M \id -(1-M)\fd_{x_i} \right)^{c_i} \left( -\delta_{x_i} \right)^{-c_i-1}\\
\times \left. \prod_{i=m+1}^{n} \E_{x_i}^{l-2} \left( M \id +(Q-M)\delta_{x_i} \right) \left( M \id +(1-M)\delta_{x_i} \right)^{-c_i} \left( \fd_{x_i} \right)^{c_i-1} \left[ \mathcal{Z}_{\MT} (n, \mathbf{x};M) \right] \right|_{\mathbf{x}=\mathbf{0}}
\end{multline}
by using the fact that 
\begin{equation*}
	\id + \Qfd_x = M \left(\id + \fd_x\right) \left( M \id -(1-M)\fd_x \right)^{-1} = M \E_x \left( M \id -(1-M)\fd_x \right)^{-1}
\end{equation*}
and
\begin{equation*}
	\id - \Qbd_x = M \left(\id-\bd_x\right) \left( M \id +(1-M)\delta_x \right)^{-1} = M \E_x^{-1} \left( M \id +(1-M)\delta_x \right)^{-1}.
\end{equation*}
We analyse how the operators in \eqref{eq:QST-AST1columns-shifted} interact with the argument of the antisymmetriser in \eqref{eq:Q-M}: The effect of the shift operator $\E_{x_i}$ is the multiplication by $1 + Y_i$. Therefore, the application of the forward difference operator $\fd_{x_i}$ or of the backward difference operator $\delta_{x_i}$ is equivalent to the multiplication by $Y_i$ or by $Y_i(1+Y_i)^{-1}$, respectively.

This observation implies that \eqref{eq:QST-AST1columns-shifted} is equal to
\begin{multline}
\label{eq:QST-AST1columns-CT}
\ct_{\mathbf{Y}} \left( \asym_{\mathbf{Y}} \left( \prod_{i=1}^m \left(-Y_i\right)^{-c_i-1} \left(1+Y_i\right)^{c_i} \left(M-(1-M)Y_i\right)^{c_i} \left( M-(P-M)Y_i \right) \right. \right.\\
\times \prod_{i=m+1}^n Y_i^{c_i-1} \left(1+Y_i\right)^{c_i+l-3} \left(M+Y_i\right)^{-c_i} \left( M+Q Y_i \right)\\
\left. \left. \times \prod_{1 \le i < j \le n} \left( M - (1-M)Y_i + Y_j + Y_i Y_j \right) \right) \prod_{1 \le i < j \le n} \left( Y_j - Y_i \right)^{-1} \right).
\end{multline}

\subsection{Sum over all \texorpdfstring{\boldmath$1$}{1}-column vectors}

Thus far, we have considered $(n,l)$-alternating sign trapezoids with prescribed $1$-column vector $\mathbf{c}$.
%
%
To sum over all $c_i$ such that $-n \le c_1 < \dots < c_m < 0 < c_{m+1} < \dots < c_n \le n$,
%
%
we ignore the upper and lower bound in the summation since the polynomial in \eqref{eq:QST-AST1columns-CT} has no constant term if $c_1 < n$ or $c_n > n$. We use the following identity which we obtain by repeated geometric series evaluations:
\begin{equation}
\label{eq:GeometricSeries}
\sum_{0 \le x_1 < \dots < x_k} X_1^{x_1} \dots X_k^{x_k} = \prod_{i=1}^{k} X_i^{i-1} \left( 1 - \prod_{j=i}^{k} X_j \right)^{-1}.
\end{equation}
Hence, by applying \eqref{eq:GeometricSeries}, we obtain that
\begin{multline*}
\sum_{c_1 < \dots < c_m < 0} \left( \prod_{i=1}^m \left(-Y_i\right)^{-c_i-1} \left(1+Y_i\right)^{c_i} \left(M-(1-M)Y_i\right)^{c_i} \left( M-(P-M)Y_i \right) \right)\\
\times \sum_{0 < c_{m+1} < \dots < c_n} \left( \prod_{i=m+1}^n Y_i^{c_i-1} \left(1+Y_i\right)^{c_i+l-3} \left(M+Y_i\right)^{-c_i} \left( M+Q Y_i \right) \right)\\
\times \prod_{1 \le i < j \le n} \left( M - (1-M)Y_i + Y_j + Y_i Y_j \right) \left( Y_j - Y_i \right)^{-1}.
\end{multline*}
is equal to
\begin{multline}
	\label{eq:QST-ASTsumOverC}
\prod_{i=1}^m \frac{1}{1+Y_i} \left( \frac{-Y_i}{\left( 1+Y_i \right) \left( M-(1-M)Y_i \right)} \right)^{m-i} \\
\times \left( 1 - \prod_{j=1}^i \left( \frac{-Y_j}{\left( 1+Y_j \right) \left( M-(1-M)Y_j \right)} \right) \right)^{-1} \frac{M-(P-M)Y_i}{M-(1-M)Y_i}\\
\times \prod_{i=m+1}^n \left( 1+Y_i \right)^{l-2} \left( \frac{Y_i \left( 1+Y_i \right)}{M+Y_i} \right)^{i-m-1} \left( 1 - \prod_{j=i}^n \left( \frac{Y_j \left( 1+Y_j \right)}{M+Y_j} \right) \right)^{-1} \frac{M+T Y_i}{M+Y_i}\\
\times \prod_{1 \le i < j \le n} \left( M - (1-M)Y_i + Y_j + Y_i Y_j \right) \left( Y_j - Y_i \right)^{-1}.
\end{multline}
Before summing over all $m$ such that $0 \le m \le n$, we apply the symmetriser to the expression~\eqref{eq:QST-ASTsumOverC}. To this end, we use the following trick by Fischer \cite{Fis}: We set
\begin{equation*}
\mathfrak{S}_n^m \coloneqq \{ \sigma \in \mathfrak{S}_n \mid \sigma (i) < \sigma (j) \text{ for all } 1 \le i < j \le m \text{ and all } m+1 \le i < j \le n \}
\end{equation*}
and define
\begin{equation*}
\subsets_{X_1,\dots,X_m}^{X_{m+1},\dots,X_n} f(X_1,\dots,X_n) \coloneqq \sum_{\sigma \in \mathfrak{S}_n^m} f\left( X_{\sigma(1)},\dots,X_{\sigma(n)} \right).
\end{equation*}
It follows that
\begin{equation*}
\sym_{X_1,\dots,X_n} f(X_1,\dots,X_n) = \subsets_{X_1,\dots,X_m}^{X_{m+1},\dots,X_n} \sym_{X_1,\dots,X_m} \sym_{X_{m+1},\dots,X_n} f(X_1,\dots,X_n).
\end{equation*}
That is, we first apply $\sym_{Y_1,\dots,Y_m}$ and $\sym_{Y_{m+1},\dots,Y_n}$ to \eqref{eq:QST-ASTsumOverC} by means of the following antisymmetriser lemma \cite[Lemma~5.9]{Hon} and its variation \cite[(5.19)]{Hon} by setting $X_i \mapsto -(X_{m+1-i}) (1+X_{m+1-i})^{-1}$:
\begin{lemma}
	Let $n \ge 1$. Then
	\begin{multline*}
	\asym_{\mathbf{X}} \left( \prod_{i=1}^{n} \frac{\left( \frac{X_i(1+X_i)}{M+X_i} \right)^{i-1}}{1-\prod_{j=i}^{n} \frac{X_j(1+X_j)}{M+X_j} } \prod_{1\leq i<j \leq n} ( M - (1-M) X_i + X_j + X_i X_j ) \right)\\
	= \prod_{i=1}^{n} \frac{M+X_i}{M - X_i^2} \prod_{1\leq i<j \leq n} \frac{(M(1+X_i)(1+X_j)-X_i X_j)(X_j-X_i)}{M-X_i X_j}.
	\end{multline*}
	%
\end{lemma}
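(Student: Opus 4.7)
The plan is to prove the identity either directly by induction on $n$ or, more efficiently, by reducing it to the equivalent variant displayed in the commented-out block (i.e., \cite[Lemma 5.9]{Hon} and \cite[(5.19)]{Hon}) through a change of variables. The base case $n=1$ is immediate, since the left-hand side reduces to $(1 - X_1(1+X_1)/(Q+X_1))^{-1} = (Q+X_1)/(Q-X_1^2)$, which is the right-hand side.

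For the substitution approach, I would apply the involution $X_i \mapsto -X_{n+1-i}/(1 + X_{n+1-i})$ and check that it sends the local factor $X_i(1+X_i)/(Q+X_i)$ to $-X_j/[(1+X_j)(Q-(1-Q)X_j)]$ with $j = n+1-i$, using the three elementary identities $-Y_i = X_{n+1-i}/(1+X_{n+1-i})$, $1+Y_i = 1/(1+X_{n+1-i})$, and $Q-(1-Q)Y_i = (Q+X_{n+1-i})/(1+X_{n+1-i})$. The simultaneous index reversal converts the tail products $\prod_{j=i}^n u_j$ into head products of the form $\prod_{j=1}^{i'} u'_j$ with $i' = n+1-i$, and correspondingly the exponents $i-1$ on $u_i$ become the exponents $n-i'$ on $u'_{i'}$; the factors $Q - (1-Q)X_i + X_j + X_i X_j$ inside the antisymmetriser and the single- and double-variable products on the right-hand side transform into their counterparts in the variant (up to explicit prefactors that cancel between the two sides). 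Consequently the stated identity follows at once from the variant.

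If instead the variant is unavailable, the inductive route is to isolate the dependence on $X_n$. Both sides are antisymmetric rational functions of $\mathbf{X}$ and hence divisible by $\prod_{i<j}(X_j - X_i)$. After clearing $\prod_{i<j}(Q - X_i X_j)$ from the right-hand side, the poles in $X_n$ appear at $Q = X_n^2$, at $Q = X_n X_k$ for $k<n$, and at $X_n = -Q$; one extracts the residues on the left, identifies them (via the induction hypothesis applied in $n-1$ variables) with the corresponding residues on the right, and closes the argument with a boundary check as $X_n \to \infty$. The main obstacle, common to both routes, is the bookkeeping around the factor $Q - (1-Q)X_i + X_j + X_i X_j$, which is asymmetric in $i,j$ and therefore interacts nontrivially both with the permutations acted upon by the antisymmetriser and with the substitution above; the key algebraic step in either approach is recognising the symmetrised numerator as the product $\prod_{i<j}(Q(1+X_i)(1+X_j) - X_i X_j)$ appearing on the right.
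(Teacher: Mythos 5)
The paper does not actually prove this lemma: it imports it verbatim as \cite[Lemma~5.9]{Hon} and obtains the commented-out variant \emph{from} it by the substitution $X_i \mapsto -X_{m+1-i}(1+X_{m+1-i})^{-1}$. Your primary route runs this substitution in reverse, and your verifications of the three elementary identities and of the transformation of the cross factors $Q-(1-Q)X_i+X_j+X_iX_j$ are correct; but this only establishes that the two displayed identities are \emph{equivalent}. Since the variant is itself nothing but the image of Lemma~5.9 under that change of variables, "reducing the lemma to the variant" is circular: at the end of the argument neither form has been proved. If your intention is merely to cite \cite[Lemma 5.9]{Hon} as an external black box, then you are doing exactly what the paper does, and no new proof has been supplied.

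The backup inductive route has a genuine gap at its first step. You list the poles of the left-hand side in $X_n$ as $Q=X_n^2$, $Q=X_nX_k$, and $X_n=-Q$, but that list is read off the \emph{right}-hand side. Before antisymmetrisation, each summand on the left has denominators $1-\prod_{j=i}^{n}X_j(1+X_j)/(Q+X_j)$, whose vanishing loci are complicated algebraic hypersurfaces that do not appear on the right at all (already for $n=1$ the single such pole $Q=X_1^2$ happens to survive, but for $n\ge 2$ the loci $\prod_{j=i}^n u_j=1$ are not of the form $Q=X_iX_j$). The essential content of the lemma is precisely that these spurious singularities cancel in the antisymmetrised sum; your outline assumes this rather than proving it, and without it the residue-matching and boundary-check strategy cannot even be set up, because one does not yet know that the difference of the two sides, multiplied by $\prod_i(Q-X_i^2)\prod_{i<j}(Q-X_iX_j)$, is a polynomial of controlled degree in $X_n$. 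The residue computations at the legitimate poles and the identification of the symmetrised numerator with $\prod_{i<j}(Q(1+X_i)(1+X_j)-X_iX_j)$ are likewise announced but not carried out. As it stands, the proposal is a plausible plan, not a proof.
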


Eventually, we obtain
\begin{multline}
\label{eq:QST-ASTsym}
\prod_{i=1}^m \frac{M-(P-M)Y_i}{M(1+Y_i)^2-Y_i^2} \prod_{1 \le i < j \le m} \frac{M-Y_i Y_j}{M(1+Y_i)(1+Y_j)-Y_i Y_j} \prod_{i=m+1}^n \left( 1+Y_i \right)^{l-2} \frac{M+Q Y_i}{M-Y_i^2}\\
\times \prod_{m+1 \le i < j \le n} \frac{M(1+Y_i)(1+Y_j)-Y_i Y_j}{M-Y_i Y_j} \prod_{i=1}^m \prod_{j=m+1}^n \frac{M-(1-M)Y_i+Y_j+Y_i Y_j}{Y_j-Y_i}.
\end{multline}
Next, we need to apply the operator $\subsets_{Y_1,\dots,Y_m}^{Y_{m+1},\dots,Y_n}$ to \eqref{eq:QST-ASTsym} and evaluate at $\mathbf{Y} = 0$. To simplify the computation, we divide \eqref{eq:QST-ASTsym} by the polynomial $\prod_{1 \le i < j \le n} ( M(1+Y_i)(1+Y_j)-Y_i Y_j ) ( M-Y_i Y_j )$, which is symmetric and, thus, invariant under the application of $\subsets_{Y_1,\dots,Y_m}^{Y_{m+1},\dots,Y_n}$. However, we need to incorporate its constant term $M^{n(n-1)}$. We obtain
\begin{multline}
\label{eq:QST-ASTsymSimplified}
M^{n(n-1)} \prod_{i=1}^m \left( M-(P-M)Y_i \right) \prod_{i,j=1}^{m} \frac{1}{M(1+Y_i)(1+Y_j)-Y_i Y_j} \prod_{i=m+1}^n \left( 1+Y_i \right)^{l-2} \left( M+Q Y_i \right)\\
\times \prod_{i,j=m+1}^{n} \frac{1}{M-Y_i Y_j} \prod_{i=1}^m \prod_{j=m+1}^n \frac{M-(1-M)Y_i+Y_j+Y_i Y_j}{\left( Y_j-Y_i \right) \left( M(1+Y_i)(1+Y_j)-Y_i Y_j \right) \left( M-Y_i Y_j \right)}.
\end{multline}
This expression can be written in determinantal form. For this purpose, we consider the \emph{Cauchy determinant}
\begin{equation*}
\det_{1 \le i,j \le n} \left( \frac{1}{X_i + Y_j} \right) = \frac{\prod_{1 \le i < j \le n} \left( X_j - X_i \right) \left( Y_j - Y_i \right)}{\prod_{i,j=1}^{n} \left( X_i + Y_j \right)}
\end{equation*}
and set $X_i = \frac{M(1+Y_i)}{M-(1-M)Y_i}$ for all $1 \le i \le m$ and $X_i = -\frac{M}{Y_i}$ for all $m+1 \le i \le n$. This yields that
\begin{equation}\label{eq:insertedDet}
\det_{1 \le i,j \le n} \left(\begin{cases}
\frac{M-(1-M)Y_i}{M(1+Y_i)(1+Y_j) - Yi Y_j}, & 1 \le i \le m\\
\frac{-Y_i}{M - Y_i Y_j}, & m+1 \le i \le n
\end{cases}\right)
\end{equation}
is equal to
\begin{multline*}
(-1)^{n-m} M^{\binom{n}{2}} \prod_{i=1}^m \left( M-(1-M)Y_i \right) \prod_{i,j=1}^{m} \frac{1}{M(1+Y_i)(1+Y_j)-Y_i Y_j} \prod_{1 \le i < j \le m} \left( Y_j - Y_i \right)^2\\
\times \prod_{i=m+1}^n Y_i \prod_{i,j=m+1}^{n} \frac{1}{M-Y_i Y_j} \prod_{m+1 \le i < j \le n} \left( Y_j - Y_i \right)^2\\
\times \prod_{i=1}^m \prod_{j=m+1}^n \frac{\left( Y_j-Y_i \right) \left( M-(1-M)Y_i+Y_j+Y_i Y_j \right)}{\left( M(1+Y_i)(1+Y_j)-Y_i Y_j \right) \left( M-Y_i Y_j \right)}.
\end{multline*}
Next, we perform the following row operations on the determinant's underlying matrix in \eqref{eq:insertedDet}: For $1 \leq i \leq m$, we multiply the $i\textsuperscript{th}$ row by $(M-(P-M)Y_i)/(M-(1-M)Y_i)$, whereas, for $m+1 \leq i \leq n$, we multiply the $i\textsuperscript{th}$ row by $-(1+Y_i)^{l-2} (M+Q Y_i)/Y_i$. Finally, by multiplying the entire determinant by $M^{\binom{n}{2}}/\prod_{1 \le i < j \le n} \left( Y_j - Y_i \right)^2$, we see that \eqref{eq:QST-ASTsymSimplified} is equal to
\begin{equation*}
\frac{M^{\binom{n}{2}}}{\prod_{1 \le i < j \le n} \left( Y_j - Y_i \right)^2} \det_{1 \le i,j \le n} \left(\begin{cases}
\frac{M-(P-M)Y_i}{M(1+Y_i)(1+Y_j) - Yi Y_j}, & 1 \le i \le m\\
\left( 1+Y_i \right)^{l-2} \frac{M + Q Y_i}{M - Y_i Y_j}, & m+1 \le i \le n
\end{cases}\right).
\end{equation*}
%

To apply the subset operator, we observe that the antisymmetry of the determinant implies that it follows for any $\sigma \in \mathfrak{S}_n^m$ that
\begin{equation*}
\frac{M^{\binom{n}{2}}}{\prod_{1 \le i < j \le n} \left( Y_j - Y_i \right)^2} \det_{1 \le i,j \le n} \left(\begin{cases}
\frac{M-(P-M)Y_i}{M(1+Y_i)(1+Y_j) - Yi Y_j}, & i \in \sigma \left(\{1,\dots,m\}\right)\\
\left( 1+Y_i \right)^{l-2} \frac{M + Q Y_i}{M - Y_i Y_j}, & i \in \sigma \left(\{m+1,\dots,n\}\right)
\end{cases}\right)\\
\end{equation*}
is equivalent to applying $\sigma$ to the variables~$Y_1,\dots,Y_n$ of \eqref{eq:QST-ASTsymSimplified}. Hence, the application of $\subsets_{Y_1,\dots,Y_m}^{Y_{m+1},\dots,Y_n}$ and the summation over all $1 \le m \le n$ yield
\begin{equation}
\label{eq:QRST-ASTdet}
\frac{M^{\binom{n}{2}}}{\prod_{1 \le i < j \le n} \left( Y_j - Y_i \right)^2} \det_{1 \le i,j \le n} \left(
R \frac{M-(P-M)Y_i}{M(1+Y_i)(1+Y_j) - Y_i Y_j} + \left( 1+Y_i \right)^{l-2} \frac{M + Q Y_i}{M - Y_i Y_j}
\right),
\end{equation}
where the insertion of the factor~$R$ into the first term of the matrix entries gives the required factors of $R^m$ in the sum over $m$.

\subsection{Transformation of the determinantal formula}

The constant term $\ct_{\mathbf{Y}}$ of the determinant expression in \eqref{eq:QRST-ASTdet} provides our first expression for the generating function $\mathcal{Z}_{\AST}(n,l;M,R,P,Q)$ for the fourfold refined enumeration of $(n,l)$-alternating sign trapezoids. We transform it into a determinant involving binomial coefficients. Our key tool is the following formula by Behrend, Di Francesco and Zinn-Justin \cite[(43)-(47)]{BFZ12}:
\begin{lemma}
		\label{lem:BFZtrick}
For a given power series $f$ in variables~$X$ and $Y$, it holds that
\begin{equation*}
	\label{eq:BFZtrick}
\left. \frac{\det_{1 \le i,j \le n} \left( f(X_i,Y_j) \right)}{\prod_{1 \le i < j \le n} \left( X_j - X_i \right) \left( Y_j - Y_i \right)} \right|_{\mathbf{X} = \mathbf{Y} = \mathbf{0}} = \det_{0 \le i,j \le n-1} \left( \left[ X^i Y^j \right] f(X,Y) \right);
\end{equation*}
here, $\left[ X^i Y^j \right] f(X,Y)$ denotes the coefficient of $X^i Y^j$ in the series expansion of $f$.
\end{lemma}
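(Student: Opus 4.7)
My plan is to use Cauchy--Binet after expanding $f$ as a power series. Write
\[ f(X,Y) = \sum_{a,b \ge 0} c_{a,b}\, X^a Y^b, \qquad c_{a,b} = [X^a Y^b]\, f(X,Y), \]
and observe that the matrix with entries $f(X_i, Y_j)$ factors (at the level of formal power series) as a product $A\, C\, B$, where $A_{i,a} = X_i^a$, $C_{a,b} = c_{a,b}$, and $B_{b,j} = Y_j^b$, with $a,b$ running over $\mathbb{Z}_{\ge 0}$. To make this rigorous I would first truncate $f$ modulo $(X,Y)^N$ for some $N$ much larger than $n$; nothing depends on $N$ at the end since the right-hand side involves only finitely many $c_{i,j}$ and the left-hand side is a polynomial identity once we check it modulo a fixed degree in $X_i, Y_j$.

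Next, the generalized Cauchy--Binet formula gives
\[ \det_{1 \le i,j \le n}\!\bigl( f(X_i,Y_j) \bigr) = \sum_{\substack{I,J \subset \{0,1,\dots,N\} \\ |I|=|J|=n}} \det\!\bigl(X_i^{a}\bigr)_{i,\,a \in I} \cdot \det\!\bigl(c_{a,b}\bigr)_{a \in I,\,b \in J} \cdot \det\!\bigl(Y_j^{b}\bigr)_{b \in J,\, j}. \]
Each generalized Vandermonde, with $I = \{a_1 < \dots < a_n\}$, factors as
\[ \det\!\bigl(X_i^{a_k}\bigr)_{1 \le i,k \le n} = \prod_{1 \le i < j \le n} (X_j - X_i) \cdot s_{\lambda_I}(X_1,\dots,X_n), \]
where $\lambda_I = (a_n - n + 1,\, a_{n-1} - n + 2,\, \dots,\, a_1)$ is a partition, and analogously for the $Y$-block. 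Dividing the Cauchy--Binet expansion by $\prod_{i<j}(X_j - X_i)(Y_j - Y_i)$ and evaluating at $\mathbf{X} = \mathbf{Y} = \mathbf{0}$ therefore picks off $s_{\lambda_I}(\mathbf{0})\, s_{\lambda_J}(\mathbf{0})$ in each term.

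The key observation is that $s_\lambda(\mathbf{0}) = 1$ if $\lambda = \emptyset$ and $s_\lambda(\mathbf{0}) = 0$ otherwise. Hence the only subsets that contribute are $I = J = \{0, 1, \dots, n-1\}$, and the sum collapses to $\det_{0 \le i,j \le n-1}(c_{i,j})$, which is exactly the right-hand side of the lemma. I do not anticipate a serious obstacle: the only subtlety is the infinite-matrix version of Cauchy--Binet, which is handled by the truncation in the first paragraph; the rest is a direct application of the Vandermonde-times-Schur identity together with the vanishing of Schur polynomials at zero.
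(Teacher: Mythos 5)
Your proposal is correct and essentially complete. Note first that the paper does not prove this lemma at all: it is imported verbatim from Behrend, Di Francesco, and Zinn-Justin \cite[(43)--(47)]{BFZ12}, so you have supplied a self-contained proof where the paper only gives a citation. Your argument is sound: expanding $f(X,Y)=\sum_{a,b}c_{a,b}X^aY^b$ and truncating modulo $(X,Y)^N$ makes the factorization $\bigl(f(X_i,Y_j)\bigr)=ACB$ legitimate, the generalized Cauchy--Binet formula applies, each generalized Vandermonde block factors as $\prod_{i<j}(X_j-X_i)\,s_{\lambda_I}(X_1,\dots,X_n)$, and since $s_\lambda$ is homogeneous of degree $|\lambda|$ the evaluation at $\mathbf{X}=\mathbf{Y}=\mathbf{0}$ kills every term except $I=J=\{0,\dots,n-1\}$, leaving $\det_{0\le i,j\le n-1}(c_{i,j})$. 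Two small points are worth making explicit if you write this up: (i) the quotient on the left-hand side is a well-defined power series to begin with because $\det_{1\le i,j\le n}\bigl(f(X_i,Y_j)\bigr)$ is antisymmetric separately in the $X_i$ and in the $Y_j$, hence divisible by both Vandermonde products --- your Cauchy--Binet expansion actually exhibits this divisibility directly, so it costs you nothing to say so; (ii) the statement as printed in the paper contains a typo (the denominator should read $\prod_{1\le i<j\le n}(X_j-X_i)(Y_j-Y_i)$), and your proof correctly interprets it in the intended form. The truncation remark disposes of the only genuine subtlety (the infinite index set in Cauchy--Binet), since the constant term of the quotient depends on only finitely many coefficients $c_{a,b}$.
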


We set
\begin{equation*}
f(X,Y) = R \frac{M-(P-M)X}{M(1+X)(1+Y) - X Y} + \left( 1+X \right)^{l-2} \frac{M + Q X}{M - X Y}
\end{equation*}
and extract the coefficient $\left[ X^i Y^j \right] f(X,Y)$. It equals $R (-1)^{j} + \left[j=0\right]$ if $i=0$; otherwise, $\left[ X^i Y^j \right] f(X,Y)$ is given by
\begin{equation*}
R (-1)^{i+j} \sum_{k \ge 0}  \binom{j}{k} M^{-k} \left( \binom{i-1}{k-1} + \binom{i-1}{k} P M^{-1} \right) + \binom{l-2}{i-j} M^{-j} + \binom{l-2}{i-j-1} Q M^{-j-1}.
\end{equation*}
Note that we set the binomial coefficient $\binom{n}{k} \coloneqq 0$ for $k<0$.

The determinant of the matrix $\left( \left[ X^i Y^j \right] f(X_i,Y_j) \right)_{0 \le i,j \le n-1}$ remains invariant under the left multiplication by the triangular matrix $\left( \binom{2-l}{i-j} \right)_{0 \le i,j \le n-1}$ with determinant $1$. Thus, we obtain
\begin{equation*}
R \sum_{k \ge 0}  (-1)^{j-k} \binom{j}{k} M^{-k} \left( \binom{2-l-k}{i-k} - \binom{1-l-k}{i-k-1} P M^{-1} \right)  + M^{-j} \delta_{i,j} + Q M^{-j-1} \delta_{i,j+1}.
\end{equation*}
In addition, we multiply the $i\textsuperscript{th}$ row by $(-1)^i$ and the $j\textsuperscript{th}$ row by $(-1)^j$ for all $0 \le i,j \le n-1$ and use $\binom{n}{k} = (-1)^{k} \binom{k-n-1}{k}$ to obtain
\begin{equation*}
R \sum_{k \ge 0} \binom{j}{k} M^{-k} \left( \binom{i+l-3}{i-k} + \binom{i+l-3}{i-k-1} P M^{-1} \right) + M^{-j} \delta_{i,j} - Q M^{-j-1} \delta_{i,j+1}.
\end{equation*}
Next, define the matrix
\begin{equation*}
K(n) \coloneqq \left( M^{-j} \delta_{i,j} - M^{-j-1} Q \delta_{i,j+1} \right)_{0 \le i,j \le n-1},
\end{equation*}
whose determinant evaluation yields $M^{-\binom{n}{2}}$ and whose inverse is
\begin{equation*}
K(n)^{-1} =%
\begin{cases}
Q^{i-j} M^{j}, & i \ge j,\\
0, & i < j.
\end{cases}
\end{equation*}
We left multiply by $K(n)^{-1}$, and, thus, we finally obtain by Lemma~\ref{lem:BFZtrick} that 
$\mathcal{Z}_{\AST}(n,l;Q,R,S,T)$ is equal to
\begin{equation}
\label{eq:QRST-ASTbinom}
\det_{0 \le i,j \le n-1} \left(R \sum_{k = 0}^{i} Q^{i-k} \sum_{m=0}^{j} \binom{j}{m} M^{k-m} \left( \binom{k+l-3}{k-m} + \binom{k+l-3}{k-m-1} P M^{-1} \right) + \delta_{i,j} \right).
\end{equation}

\section{Weighted Enumeration of Column Strict Shifted Plane Partitions}
\label{sec:CSSPPs}

In order to enumerate column strict shifted plane partitions, we interpret them as a family of nonintersecting lattice paths which can be enumerated by the Lindstr{\"o}m--Gessel--Viennot lemma.

\subsection{Interpretation of column strict shifted plane partitions as families of nonintersecting lattice paths}

We transform column strict shifted plane partitions into a family of nonintersecting lattice paths as follows: Each row corresponds to a path that consists of vertical and horizontal unit steps. If $(\pi_{i,i},\pi_{i,i+1},\dots,\allowbreak \pi_{i,i+\lambda_i-1})$ denotes the $i\textsuperscript{th}$ row of a column strict shifted plane partition $\pi$ of shape $\lambda$, then the associate path starts at $(\lambda_i-1,0)$ on the $x$-axis and ends at $(-1,\pi_{i,i}-1)$; the heights of the vertical steps are the parts of the corresponding row in reverse order and reduced by $1$, that is, $\pi_{i,i+\lambda_i-1}-1$,\dots , $\pi_{i,i+1}-1$ and $\pi_{i,i}-1$. Figure~\ref{fig:LatticePaths} displays the family of nonintersecting lattice paths corresponding to the column strict shifted plane partition in Figure~\ref{fig:CSSPP}. 
\begin{figure}[ht]
	\centering
		\begin{tikzpicture}[scale=.5]
		\draw [help lines,step=1cm, dashed] (-1.75,-.75) grid (4.75,8.75);
		
		\draw[->,thick] (-1.75,0)--(4.75,0) node[right]{$x$};
		\draw[->,thick] (0,-.75)--(0,8.75) node[above]{$y$};
		
		\fill (4,0) circle (5pt);
		\fill (2,0) circle (5pt);
		\fill (1,0) circle (5pt);
		
		\fill (0,8) circle (5pt);
		\fill (0,6) circle (5pt);
		\fill (0,5) circle (5pt);
		
		
		\draw[ultra thick] (-1,8) -- (0,8) -- (0,7) -- (2,7) -- (2,6) -- (3,6) -- (3,2) -- (4,2) -- (4,0);
		
		\node at (-0.5,8.5) {$9$};
		\node at (0.5,7.5) {$8$};
		\node at (1.5,7.5) {$8$};
		\node at (2.5,6.5) {$7$};
		\node at (3.5,2.5) {$3$};
		
		\draw[ultra  thick] (-1,6) -- (1,6) -- (1,4) -- (2,4) -- (2,0);
		
		\node at (-0.5,6.5) {$7$};
		\node at (0.5,6.5) {$7$};
		\node at (1.5,4.5) {$5$};
		
		\draw[ultra thick] (-1,5) -- (0,5) -- (0,0) -- (1,0);
		
		\node at (-0.5,5.5) {$6$};
		\node at (0.5,0.5) {$1$};
		
		\end{tikzpicture}
		\caption{Family of lattice paths corresponding to the corresponding to the column strict shifted plane partition in Figure~\ref{fig:CSSPP}}
		\label{fig:LatticePaths}
\end{figure}
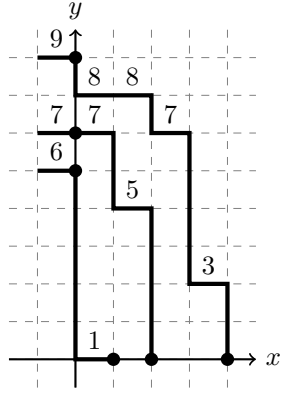

This construction yields a bijective correspondence between column strict shifted plane partitions of class $l-1$ with at most $n$ parts in the first row and the family of nonintersecting lattice paths consisting of horizontal and vertical unit steps with starting points $S \subseteq \{ S_i \coloneqq (i,0) \mid  0 \le i \le n-1\}$ and endpoints $E \subseteq \{ E_i \coloneqq (0,i+l-1) \mid  0 \le i \le n-1\}$ such that $S_i \in S$ if and only if $E_i \in E$.

By this interpretation of column strict shifted plane partitions as a family of nonintersecting lattice paths and by the Lindstr{\"o}m--Gessel--Viennot lemma, it can be shown that column strict shifted plane partitions of class $l-1$ with at most $n$ parts in the first row are enumerated by
\begin{equation}
\label{eq:AndrewsDet}
\det_{0 \le i,j \le n-1} \left( \binom{i+j+l-1}{i} + \delta_{i,j} \right).
\end{equation}
This was first proved by Andrews \cite{And79}. In fact, the determinant~\eqref{eq:AndrewsDet} can be obtained from \eqref{eq:QRST-ASTbinom} by setting $M=R=P=Q=1$.

\subsection{Weighted enumeration for {\boldmath$d \ge 1$}}

We generalise Andrews' result. To this end, consider an element of $\CSSPP_{n,l-1}$ and $L$ be the corresponding family of nonintersecting lattice paths. For the time being, fix $d \in \{1,\dots,l-1\}$. We investigate how to translate the statistics on column strict shifted plane partitions into statistics on lattice paths: The weight of $L$ is
\begin{equation*}
M^{\mu_d(L)} R^{r(L)} P^{p_d(L)} Q^{q(L)},
\end{equation*}
where
\begin{align*}
\mu_d(L) &\coloneqq \text{\# horizontal steps of $L$ below the line $y=x+l-1$ neither touching the line $y=x+d$}\\
&\hphantom{\coloneqq\qquad} \text{from the right nor lying on the $x$-axis,}\\
r(L) &\coloneqq \text{\# paths of $L$,}\\
p_d(L) &\coloneqq \text{\# horizontal steps of $L$ touching the line $y=x+d$ from the right,}\\
q(L) &\coloneqq \text{\# horizontal steps of $L$ on the $x$-axis.}
\end{align*}

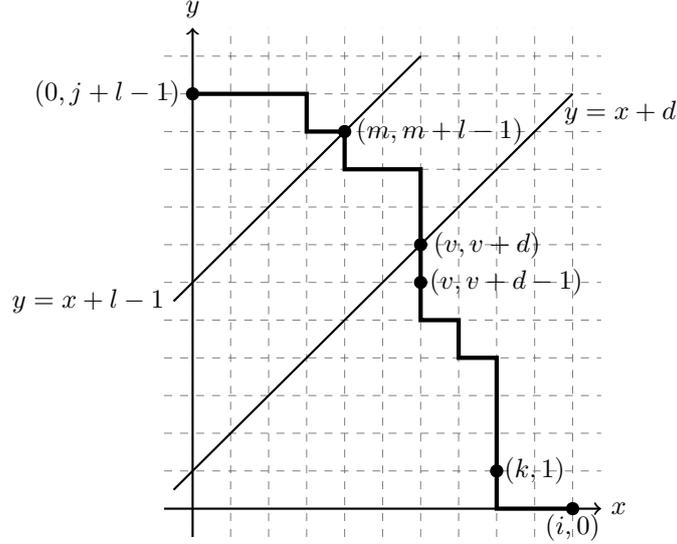
\begin{figure}[ht]
	\centering
	\begin{tikzpicture}[scale=.5]
	\draw [help lines,step=1cm, dashed] (-.75,-.75) grid (10.75,12.75);
	
	\draw[->,thick] (-.75,0)--(10.75,0) node[right]{$x$};
	\draw[->,thick] (0,-.75)--(0,12.75) node[above]{$y$};
	
	\fill (10,0) circle (5pt);
	\node at (10,-.5) {\contour{white}{$(i,0)$}};
	
	\fill (8,1) circle (5pt);
	\node at (9,1) {\contour{white}{$(k,1)$}};
	
	\fill (6,6) circle (5pt);
	\node at (8.25,6) {\contour{white}{$(v,v+d-1)$}};
	
	\fill (6,7) circle (5pt);
	\node at (7.75,7) {\contour{white}{$(v,v+d)$}};
	
	\fill (4,10) circle (5pt);
	\node at (6.5,10) {\contour{white}{$(m,m+l-1)$}};
	
	\fill (0,11) circle (5pt);
	\node at (-2.25,11) {\contour{white}{$(0,j+l-1)$}};
	
	\draw[ultra thick] (0,11) --++ (1,0) --++ (1,0) --++ (1,0) --++ (0,-1) --++ (1,0) --++ (0,-1) --++ (1,0) --++ (1,0) --++ (0,-1) --++ (0,-1) --++ (0,-1) --++ (0,-1) --++ (1,0) --++ (0,-1) --++ (1,0) --++ (0,-1) --++ (0,-1) --++ (0,-1) --++ (0,-1) --++ (1,0) --++ (1,0);
	
	\draw[thick] (-.5,5.5) -- (6,12);
	\node at (-2.75,5.5) {\contour{white}{$y=x+l-1$}};
	
	\draw[thick] (-.5,.5) -- (10,11);
	\node at (11.25,10.5) {\contour{white}{$y=x+d$}};
	
	\end{tikzpicture}
	\caption{Lattice path from $(i,0)$ to $(0,j+l-1)$ with $i-k$ horizontal steps at height $0$ that intersects the line~$y=x+d$ from the right side with a vertical step}
	\label{fig:LatticePathsStatistics}
\end{figure}

Figure~\ref{fig:LatticePathsStatistics} shows a single lattice path from $(i,0)$ to $(0,j+l-1)$ that has $i-k$ horizontal steps at height~$0$. In addition, the line $y=x+d$ intersects the path with a vertical step from the right side. Let $\mathcal{P}( (x_1,y_1) \rightarrow (x_2,y_2) )$ denote the number of paths from $(x_1,y_1)$ to $(x_2,y_2)$ with horizontal steps $(-1,0)$ and vertical steps $(0,1)$; that is, $\mathcal{P}( (x_1,y_1) \rightarrow (x_2,y_2) ) = \binom{(x_1-x_2)+(y_2-y_1)}{x_1-x_2}$. We extend the previously defined statistics $\mu_d$, $r$, $p_d$ and $q$ to single paths from $(i,0)$ to $(0,j+l-1)$ and derive the generating function of all paths from $(i,0)$ to $(0,j+l-1)$ which intersect the line $y=x+d$ with a vertical step from the right as
\begin{multline*}
R \sum_{k=0}^{i} Q^{i-k} \sum_{v=0}^{k} \mathcal{P} \left((k,1) \rightarrow (v,v+d-1)\right) \\ \times\sum_{m=0}^{j} M^{k-m} \mathcal{P} \left((v,v+d) \rightarrow (m,m+l-1)\right) \mathcal{P} \left((m,m+l-1) \rightarrow (0,j+l-1)\right)\\
=R \sum_{k=0}^{i} Q^{i-k} \sum_{v=0}^{k} \binom{k+d-2}{k-v} \sum_{m=0}^{j} M^{k-m} \binom{l-1-d}{v-m} \binom{j}{m},
\end{multline*}
which we simplify to
\begin{equation}
	\label{eq:GenFuncCSSPPwithoutS}
R \sum_{k=0}^{i} Q^{i-k} \sum_{m=0}^{j} M^{k-m} \binom{j}{m} \binom{k+l-3}{k-m}
\end{equation}
by using the Chu-Vandermonde identity.

If the line~$y=x+d$ intersects the path with a horizontal step from the right side, the generating function of such paths is given by
\begin{multline*}
R P \sum_{k=0}^{i} Q^{i-k} \sum_{v=0}^{k} \mathcal{P} \left((k,1) \rightarrow (v+1,v+d)\right) \\
\times\sum_{m=0}^{j} M^{k-m-1} \mathcal{P} \left((v,v+d) \rightarrow (m,m+l-1)\right) \mathcal{P} \left((m,m+l-1) \rightarrow (0,j+l-1)\right),
\end{multline*}
which is equal to
\begin{equation}
	\label{eq:GenFuncCSSPPwithS}
R P\sum_{k=0}^{i} Q^{i-k} \sum_{m=0}^{j} M^{k-m-1} \binom{j}{m} \binom{k+l-3}{k-m-1}.
\end{equation}
The sum of \eqref{eq:GenFuncCSSPPwithoutS} and \eqref{eq:GenFuncCSSPPwithS} yields the generating function of all single lattice paths from $(i,0)$ to $(0,j+l-1)$:
\begin{equation}
	\label{eq:GenFuncPath}
R \sum_{k = 0}^{i} Q^{i-k} \sum_{m=0}^{j} \binom{j}{m} M^{k-m} \left( \binom{k+l-3}{k-m} + \binom{k+l-3}{k-m-1} P M^{-1} \right).
\end{equation}
By the well-known Lindstr{\"o}m--Gessel--Viennot lemma \cite{GV85,GV89,Lin73}, $\mathcal{Z}_{\CSSPP}(n,l-1,d;M,R,P,Q)$ is equal to
\begin{equation*}
\sum_{\substack{0 \le u_1 < \dots < u_r \le n-1,\\0 \le r \le n-1}}^{} \det_{1 \le i,j \le r} \left( R \sum_{k = 0}^{u_i} Q^{u_i-k} \sum_{m=0}^{u_j} \binom{u_j}{m} M^{k-m} \left( \binom{k+l-3}{k-m} + \binom{k+l-3}{k-m-1} P M^{-1} \right) \right),
\end{equation*}
which simplifies to \eqref{eq:QRST-ASTbinom}.

\subsection{The case {\boldmath$d = 0$}}

In the case $d=0$, a family~$L$ of nonintersecting lattics paths is assigned the weight
\begin{equation*}
M^{\mu_0(L)} R^{r(L)} P^{p_0(L)} Q^{q_0(L)} (P+Q-M)^{\left[ \text{$L$ contains $(1,0) \rightarrow (0,0)$} \right]},
\end{equation*} 
where
\begin{align*}
\mu_0(L) &\coloneqq \text{\# horizontal steps of $L$ below the line $y=x+l-1$ neither touching the line $y=x$}\\
&\phantom{\coloneqq\qquad} \text{from the right nor lying on the $x$-axis,}\\
p_0(L) &\coloneqq \text{\# horizontal steps of $L$ touching the line $y=x$ from the right but not on the $x$-axis,}\\
q_0(L) &\coloneqq \text{\# horizontal steps of $L$ on the $x$-axis except $(1,0) \rightarrow (0,0)$.}
\end{align*}

To derive the generating function of all single lattice paths from $(i,0)$ to $(0,j+l-1)$, we distinguish several cases: If the path contains the horizontal step $(1,0) \rightarrow (0,0)$, it has the weight $R Q^{i-1} (P+Q-M)$. If the path does not contain the horizontal step $(1,0) \rightarrow (0,0)$, it 
can intersect the line $y=x$ with a vertical step from below. In this case, the generating function is given by
\begin{multline*}
R \sum_{k=1}^{i} Q^{i-k} \sum_{v=1}^{k} \mathcal{P} \left((k,1) \rightarrow (v,v-1)\right) \\ \times\sum_{m=0}^{j} M^{k-m} \mathcal{P} \left((v,v) \rightarrow (m,m+l-1)\right) \mathcal{P} \left((m,m+l-1) \rightarrow (0,j+l-1)\right),
\end{multline*}
which is equal to
\begin{equation}
	\label{eq:GenFuncCSSPPwithoutSd}
R Q^{i-1} M + R \sum_{k=1}^{i} Q^{i-k} \sum_{m=0}^{j} M^{k-m} \binom{j}{m} \binom{k+l-3}{k-m}.
\end{equation}
If the path intersects the line $y=x$ with a horizontal step from the right side, the generating function is given by
\begin{multline*}
R P \sum_{k=2}^{i} Q^{i-k} \sum_{v=1}^{k} \mathcal{P} \left((k,1) \rightarrow (v+1,v)\right) \\ \times\sum_{m=0}^{j} M^{k-m-1} \mathcal{P} \left((v,v) \rightarrow (m,m+l-1)\right) \mathcal{P} \left((m,m+l-1) \rightarrow (0,j+l-1)\right),
\end{multline*}
which is equal to
\begin{equation}
	\label{eq:GenFuncCSSPPwithSd}
R P \sum_{k=2}^{i} Q^{i-k} \sum_{m=0}^{j} M^{k-m-1} \binom{j}{m} \binom{k+l-3}{k-m-1}.
\end{equation}
As before, the sum of \eqref{eq:GenFuncCSSPPwithoutSd}, \eqref{eq:GenFuncCSSPPwithSd} and $R Q^{i-1} (P+Q-M)$ is equal to \eqref{eq:GenFuncPath}.

As a result, \eqref{eq:QRST-ASTbinom} is the generating function $\mathcal{Z}_{\CSSPP}(n,l-1,d;M,R,P,Q)$ of column strict shifted plane partitions of class $l-1$ with at most $n$ entries in the first row for any $d\in\{0,\dots,l-1\}$. This completes the proof of Theorem~\ref{thm:maintheorem}.



\section{The \texorpdfstring{\boldmath$x$}{x}-Enumeration}
\label{sec:2Enum}

For an integer $x\in\mathbb{Z}$, we define the \emph{$x$-enumeration} of $(n,l)$-alternating sign trapezoids as
\begin{equation*}
\sum_{A\in\AST_{n,l}} x^{\mu(A)}.
\end{equation*}
Ayyer, Behrend and Fischer \cite{ABF} proved that the analogously defined $x$-enumeration of $n \times n$-alternating sign matrices is equal to the $x$-enumeration of $(n-1,3)$-alternating sign trapezoids. For specific values of $x$, the $x$-enumeration of alternating sign matrices surprisingly yields round numbers: Mills, Robbins and Rumsey \cite{MRR83} presented a closed expression of the $2$-enumeration and Kuperberg \cite{Kup96} for the $3$-enumeration.

We prove that the $2$-enumeration of $(n,l)$-alternating sign trapezoids yields round numbers in general, too.

\begin{theorem}
	\label{thm:2Enum}
	For $l \ge 2$, the $2$-enumeration of $(2n+1,l)$-alternating sign trapezoids is given by
	\begin{equation*}
		2^{n+1} \left( \prod_{1 \le i \le j \le n} \frac{2(i+j)+l-3}{2i-1} \right)^2
	\end{equation*}
	and the $2$-enumeration of $(2n+2,l)$-alternating sign trapezoids is given by
	\begin{equation*}
	2^{n+1} \prod_{1 \le i \le j \le n} \frac{2(i+j)+l-3}{2i-1} \prod_{1 \le i \le j \le n+1} \frac{2(i+j)+l-3}{2i-1} .
	\end{equation*}
\end{theorem}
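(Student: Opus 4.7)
The plan is to obtain the $2$-enumeration directly from the determinantal formula \eqref{eq:QRST-ASTbinom} by specialising at $Q=2$, $R=S=T=1$ and then reducing the resulting determinant to a closed product via elementary row and column operations combined with a classical determinant evaluation. After the specialisation, Pascal's rule rewrites the inner binomial combination as
\begin{equation*}
  2\binom{k+l-3}{k-m}+\binom{k+l-3}{k-m-1}=\binom{k+l-3}{k-m}+\binom{k+l-2}{k-m},
\end{equation*}
so the $(i,j)$ entry of the matrix becomes the sum of two terms of the same shape differing only by a shift in $l$. This doubling is suggestive: it matches the square appearing in the product formula for $(2n+1,l)$-ASTs and should persist throughout the reduction.

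My main strategy is then to exhibit an $LU$-type factorisation of the specialised matrix. Concretely, I would look for unipotent triangular matrices $L$ and $U$ with explicit binomial entries such that $LMU$ is block-diagonal: in the $(2n+1,l)$ case, two nearly identical $n\times n$ blocks together with a $1\times 1$ block contributing the prefactor $2^{n+1}$; in the $(2n+2,l)$ case, one $n\times n$ block and one $(n+1)\times(n+1)$ block, matching the asymmetric product in the statement. A natural source of candidate transformations is the identity $2^{k}(1+x)^{k}=(2(1+x))^{k}$ together with the generating function $(2+x)(1+x/2)^{j}$, which control how the $Q=2$ weight couples the row and column indices. Experimentation with $n=1,2,3$ and several small values of $l$ should quickly suggest the right form of $L$ and $U$.

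Each resulting block should then fit a standard Andrews-Krattenthaler binomial determinant whose value is a shifted factorial product; matching this to the statement yields the desired $\prod_{1\le i\le j\le n}(2(i+j)+l-3)/(2i-1)$. The block evaluation is essentially routine and can either be looked up in Krattenthaler's determinant compendium or proved by induction on $n$, checking that the conjectured product satisfies the same two-term recurrence as the principal-submatrix determinants. The main obstacle is the factorisation step itself: the weight $2^{k-m}$ couples rows and columns in a non-Cauchy way, so the triangular transformations are not visible a priori. Exhibiting them rigorously, together with a clean uniform treatment of the two parity cases $(2n+1,l)$ and $(2n+2,l)$, is where most of the work will lie.
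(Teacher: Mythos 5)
Your starting point---specialising \eqref{eq:QRST-ASTbinom} at $Q=2$, $R=S=T=1$---is the same as the paper's, and the Pascal-rule rewriting $2\binom{k+l-3}{k-m}+\binom{k+l-3}{k-m-1}=\binom{k+l-3}{k-m}+\binom{k+l-2}{k-m}$ is correct. But from that point on the proposal is a plan rather than a proof: the actual evaluation of $\det\bigl(f(l;i,j)+\delta_{i,j}\bigr)$ is never carried out. The unipotent matrices $L,U$ and the claimed block-diagonal form are only conjectured to exist (``experimentation should quickly suggest the right form''), and the assertion that each block is then an ``essentially routine'' binomial determinant is unsubstantiated: because of the added identity matrix the determinant is not of Cauchy/Vandermonde product type, and its evaluation is in fact a substantial theorem, not a compendium lookup of a pure binomial determinant. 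There is also a bookkeeping slip in the even case: for $(2n+2,l)$ your proposed blocks of sizes $n$ and $n+1$ account for only $2n+1$ of the $2n+2$ rows. Since, as you yourself say, ``most of the work'' lies exactly in the factorisation you have not exhibited, the theorem is not established by this argument.

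For comparison, the paper avoids any new determinant evaluation. After the same specialisation it massages the entry via index shifts, Chu--Vandermonde and a ${}_2F_1$ transformation to show $f(l;i,j)=g(l-2;i,j)$ with $g(a;i,j)=\sum_{m=0}^{i}\binom{m+j+a}{m}\binom{i+a}{m+a}$, i.e.\ the matrix is precisely the one in Andrews' determinant $D_N(a)=\det_{0\le i,j\le N-1}\bigl(g(a;i,j)+\delta_{i,j}\bigr)$ with $a=l-2$; it then quotes Andrews' 1987 evaluation of the ratios $D_{2n}(a)/D_{2n-1}(a)$ and $D_{2n+1}(a)/D_{2n}(a)$, which together with $D_1(a)=2$ yields both product formulas. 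Note that these known results are formulated as clean \emph{consecutive ratios}, which already hints that the structure is inductive rather than a one-shot splitting into two independent blocks; a self-contained version of your route would amount to reproving Andrews' theorem, and your outline gives no indication of how the recurrence would close. The least-effort repair is to perform the paper's entry transformation (or an equivalent one) and then invoke a known evaluation of the resulting determinant.
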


\begin{proof}
Let $l \ge 1$. To prove the conjecture, we provide a product formula for $\mathcal{Z}_{\AST}(n,l;2,1,1,1)$. To this end, we set
\begin{equation*}
\label{eq:funkdet}
f(l;i,j) \coloneqq \sum_{k = 0}^{i} \sum_{m=0}^{j} 2^{k-m} \binom{j}{m} \left( \binom{k+l-3}{k-m} + \frac{1}{2} \binom{k+l-3}{k-m-1} \right).
\end{equation*}
Then
\begin{equation*}
\mathcal{Z}_{\AST}(n,l;2,1,1,1) = \det_{0 \le i,j \le n-1} \left( f(l;i,j) + \delta_{i,j} \right).
\end{equation*}
First, we simplify the expression for $f(l;i,j)$. We split the sum up, shift the index in the second summand and make use of $ \binom{j}{m} + \binom{j}{m-1} = \binom{j+1}{m}$ in order to see that
\begin{equation*}
\sum_{m=0}^{j} 2^{k-m} \binom{j}{m} \left( \binom{k+l-3}{k-m} + \frac{1}{2} \binom{k+l-3}{k-m-1} \right)
= \sum_{m=0}^{j+1}  2^{k-m} \binom{j+1}{m} \binom{k+l-3}{k-m},
\end{equation*}
which can be restated as an ordinary hypergeometric function\footnote{The \emph{ordinary hypergeometric function} is defined as $\pFq{2}{1}{a,b}{c}{z} \coloneqq \sum_{n=0}^{\infty} \frac{\left(a\right)_n \left(b\right)_n}{\left(c\right)_n} \frac{z^n}{n!}$, where $\left(a\right)_n \coloneqq a \left(a+1\right) \cdots \left(a+n-1\right)$ if $n \ge 1$ and $\left(a\right)_0 \coloneqq 1$ denote the \emph{Pochhammer symbol}.} as follows by means of an instance of the classical Pfaff transformation \cite[(2.2.6)]{AAR99}:
\begin{equation*}
2^k \binom{k+l-3}{k} \pFq{2}{1}{-k,-j-1}{l-2}{\frac{1}{2}} = \binom{k+l-3}{k} \pFq{2}{1}{-k,j+l-1}{l-2}{-1}.
\end{equation*}
Finally, by resorting the order of summation, we further deduce that $f(l;i,j)$ can be expressed as
\begin{multline*}
\sum_{k = 0}^{i} \binom{k+l-3}{k} \pFq{2}{1}{-k,j+l-1}{l-2}{-1}\\
= \sum_{k = 0}^{i} \binom{k+l-3}{k} \sum_{m=0}^{k} \binom{k}{m} \frac{\left(j+l-1\right)_m}{\left(l-2\right)_m} = \sum_{m=0}^{i}  \frac{\left(j+l-1\right)_m}{\left(l-2\right)_m} \sum_{k = m}^{i} \binom{k+l-3}{k} \binom{k}{m}\\
= \sum_{m=0}^{i}  \frac{\left(j+l-1\right)_m}{\left(l-2\right)_m} \binom{m+l-3}{m} \sum_{k = 0}^{i-m} \binom{k+m+l-3}{k} = \sum_{m=0}^{i}  \frac{\left(j+l-1\right)_m}{m!} \binom{i+l-2}{m+l-2},
\end{multline*}
which is equal to
\begin{equation}
	\label{eq:HypDet}
\binom{i+l-2}{l-2} \pFq{2}{1}{-i,j+l-1}{l-1}{-1}.
\end{equation}
However, if we set $a=l-2$, then
\begin{equation*}
g(a;i,j) \coloneqq \sum_{m = 0}^{i} \binom{m+j+a}{m}\binom{i+a}{m+a}
\end{equation*}
equals \eqref{eq:HypDet}, too. Hence, $g(l-2;i,j)=f(l;i,j)$. Andrews \cite[Theorem~1 \& Lemma~2]{And87} evaluated the determinant $D_n(a) \coloneqq \det_{0 \le i,j \le n-1} \left( g(a;i,j) + \delta_{i,j} \right)$ for $a \ge 0$ and proved that
\begin{align}
\frac{D_{2n}(a)}{D_{2n-1}(a)} &= 2^n \prod_{k=1}^{n} \frac{2\left(n+k\right)+a-1}{n+k},\label{eq:AndrewsD1}\\
\frac{D_{2n+1}(a)}{D_{2n}(a)} &= 2^{n+1} \prod_{k=1}^{n} \frac{2\left(n+k\right)+a-1}{n+k}.\label{eq:AndrewsD2}
\end{align}
Since $D_1(a)=2$, \eqref{eq:AndrewsD1} and \eqref{eq:AndrewsD2} imply Theorem~\ref{thm:2Enum}.\end{proof}

The determinant $D_n(a)$ originally emerged in \cite{MRR87} as a generating function of a differently weighted enumeration of column strict shifted plane partitions of class $2a$ with at most $n$ parts in the first row. 

Note that $\mathcal{Z}_{\AST}(n,1;x,1,1,1)$ is not equivalent to the $x$-enumeration of quasi alternating sign triangles since we altered the weight in the case $l=1$. However, computer experiments suggest that neither the $2$-enumeration $2$, $5$, $22$, $188$, $3152$, $104704$, $6905856,\dots$ nor the $3$-enumeration $2$, $5$, $24$, $252$, $5832$, $301077$, $34720812,\dots$ of quasi alternating sign triangles yields round numbers. Computer experiments suggest that the $3$-enumeration of $(n,l)$-alternating sign trapezoids does not provide round numbers either unless $l=3$. Furthermore, note that for $l=2$, the $2$-enumeration coincides with the total dimension of the homology of free $2$-step nilpotent Lie algebras \cite[Theorem~1.1]{GKT02} as well as with the number of rhombus tilings of half-hexagons with glued sides \cite[(3.5)]{DFZJZ05}. 

\section*{Acknowledgement}
I thank the anonymous referee for the careful reading of my manuscript and for the various helpful comments and suggestions.

\bibliography{ASM}
\bibliographystyle{halpha}

\textsc{Universit{\"a}t Wien, Fakult{\"a}t f{\"u}r Mathematik, Oskar-Morgenstern-Platz 1, 1090 Wien, Austria}

\textit{E-mail address:} \href{mailto:hans.hoengesberg@univie.ac.at}{hans.hoengesberg@univie.ac.at}

\end{document}